\def\figurename{Figure} 
\renewcommand{\fnum@figure}[1]{\figurename~\thefigure.}
\def\tablename{Table} 
\renewcommand{\fnum@table}[1]{\tablename~\thetable.}
\newtheorem{theorem}{Theorem}[section]
\newtheorem{lemma}[theorem]{Lemma}
\newtheorem{corollary}[theorem]{Corollary}
\newtheorem{proposition}[theorem]{Proposition}
\theoremstyle{example}
\newtheorem{example}[theorem]{Example}
\theoremstyle{definition}
\newtheorem{definition}[theorem]{Definition}
\theoremstyle{remark}
\newtheorem{remark}[theorem]{Remark}
\numberwithin{equation}{section}
\begin{document}

\title{\bfseries\scshape{BiHom-Poisson color algebras}}

\author{\bfseries\scshape Ibrahima BAKAYOKO\thanks{e-mail address: ibrahimabakayoko27@gmail.com}\\
D\'epartement de Math\'ematiques,
Universit\'e de N'Z\'er\'ekor\'e,\\
BP 50 N'Z\'er\'ekor\'e, Guin\'ee.
}
 
\date{}
\maketitle 


\noindent\hrulefill

\noindent {\bf Abstract.} 
The goal of this paper is to introduce  BiHom-Poisson color algebras and give various constructions by using some specific maps such as 
morphisms. We introduce averaging operator and element of centroid for  BiHom-Poisson color algebras and point out that BiHom-Poisson
 color algebras are closed under averaging operators and elements of centroid. We also show that any regular BiHom-associative color algebra leads 
to  BiHom-Poisson color algebra via the commutative bracket. Then we prove that any  BiHom-Poisson color algebra together 
with Rota-Baxter operator or multiplier give rises to another  BiHom-Poisson color algebra. Next, we show that tensor 
product of any BiHom-associative color algebra and any  BiHom-Poisson color algebra is also a  BiHom-Poisson color algebra.

\noindent \hrulefill

\vspace{.3in}

 \noindent {\bf AMS Subject Classification: } 17A30, 17B63, 17B70.

\vspace{.08in} \noindent \textbf{Keywords}: 
 BiHom-Poisson color algebras, bijective even linear map, element of centroid, averaging operator,  Rota-Baxter operator, 
multiplier and tensor product.
\vspace{.3in}
\vspace{.2in}
\section{Introduction}
BiHom-algebraic structures were introduced in the first time in 2015 by G. Graziani, A. Makhlouf, C. Menini and F. Panaite 
 in \cite{GACF} from a categorical approach  as an extension of the class of Hom-algebras.
Since then, other interesting BiHom-type algebraic structures of many Hom-algebraic structures has been intensively studied as 
BiHom-Lie colour algebras structures \cite{ABA}, Representations of BiHom-Lie algebras \cite{YH}, 
BiHom-Lie superalgebra structures \cite{SS}, 
$\{\sigma, \tau\}$-Rota-Baxter operators, infinitesimal
Hom-bialgebras and the associative (Bi)Hom-Yang-Baxter equation \cite{MFP}, The construction and deformation of BiHom-Novikov algebras \cite{SZW},
On n-ary Generalization of BiHom-Lie algebras
and BiHom-Associative Algebras \cite{KMS}, Rota-Baxter operators on BiHom-associative
algebras and related structures \cite{LAC}.

The purpose of this paper is to introduce  BiHom-Poisson color algebras and give various constructions by using special even linear maps. 
In section 2, we give basic definition and properties of  BiHom-Poisson color algebras. 
In particular, we show that any regular BiHom-associative color algebra leads 
to  BiHom-Poisson color algebra via the commutative bracket.
In section 3, we show that BiHom-Poisson
 color algebras are closed under either tensorization with scaler field or commutative BiHom-associative color algebras, averaging operators and elements of centroid. 
Moreover, we prove that any  BiHom-Poisson color algebra endowed with Rota-Baxter operator or multiplier give rises to another 
 BiHom-Poisson color algebra.
\pagestyle{fancy} \fancyhead{} \fancyhead[EC]{ } 
\fancyhead[EL,OR]{\thepage} \fancyhead[OC]{Ibrahima bakayoko} \fancyfoot{}
\renewcommand\headrulewidth{0.5pt}

 Throughout this paper, all graded vector spaces are assumed to be over a field $\mathbb{K}$ of characteristic different from 2.
\section{Definitions and basic properties}
In this section, we recall basic definitions and some elementary properties.
\subsection{Multipliers}
\begin{definition}
 Let $G$ be an abelian group. A map $\varepsilon :G\times G\rightarrow {\bf \mathbb{K}^*}$ is called a skew-symmetric bicharacter on $G$ if the following
identities hold, 
\begin{enumerate} 
 \item [(i)] $\varepsilon(g, g')\varepsilon(g', g)=1$,
\item [(ii)] $\varepsilon(g, g'+g'')=\varepsilon(g, g')\varepsilon(g, g'')$,
\item [(iii)]$\varepsilon(g+g', g'')=\varepsilon(g, g'')\varepsilon(g', g'')$,
\end{enumerate}
$g, g', g''\in G$.
\end{definition}
\begin{remark}
Observe that $\varepsilon(g, e)=\varepsilon(e, g)=1, \varepsilon(g,g)=\pm 1 \;\mbox{for all}\; g\in G, \;\mbox{where}\; e \;\mbox{is the identity of}\; G.$
\end{remark}

\begin{example}
 \begin{enumerate}
\item [1)] 
$ G=\mathbb{Z}_2^n=\{(\alpha_1, \dots, \alpha_n)| \alpha_i\in\mathbb{Z}_2 \}, \quad
\varepsilon((\alpha_1, \dots, \alpha_n), (\beta_1, \dots, \beta_n)):= (-1)^{\alpha_1\beta_1+\dots+\alpha_n\beta_n},$
\item [2)] $G=\mathbb{Z}\times\mathbb{Z} ,\quad \varepsilon((i_1, i_2), (j_1, j_2))=(-1)^{(i_1+i_2)(j_1+j_2)}$,
\item [3)] $G=\{-1, +1\} , \quad\varepsilon(i, j)=(-1)^{(i-1)(j-1)/{4}}$.
\end{enumerate}
\end{example}

\begin{example}
 Let $\sigma : G\times G\rightarrow\mathbb{K}^*$ be any mapping such that 
\begin{eqnarray}
 \sigma(g, g'+g'')\sigma(g', g'')=\sigma(g, g')\sigma(g+g', g''), \forall g, g', g''\in G.
\end{eqnarray}
Then, $\delta(g, g')=\sigma(g, g')\sigma(g', g)^{-1}$ is a bicharacter on $G$. In this case, $\sigma$ is called a {\it multiplier} on $G$, and 
$\delta$ the bicharacter associated with $\sigma$.

For instance, let us define the mapping $\sigma : G\times G\rightarrow\mathbb{R}$ by
\begin{eqnarray}
 \sigma((i_1, i_2), (j_1, j_2))=(-1)^{i_1j_2}, \forall i_k, j_k\in\mathbb{Z}_2, k=1, 2.\nonumber
\end{eqnarray}
It is easy to verify that $\sigma$ is a multiplier on $G$ and 
\begin{eqnarray}
 \delta((i_1, i_2), (j_1, j_2))=(-1)^{i_1j_2-i_2j_1}, \forall i_k, j_k\in \mathbb{Z}_2, i=1, 2. \nonumber
\end{eqnarray}
is a bicharacter on $G$.
\end{example}

\begin{definition}
A color BiHom-algebra is a quadruple $(A, \mu, \varepsilon, \alpha)$ in which 
\begin{enumerate}
 \item [a)] $A$ is a $G$-graded vector space i.e. $A=\bigoplus_{g\in G}A_g$,
\item [b)] $\mu : A \times A \rightarrow A$ is a even bilinear map i.e. $\mu(A_g, A_{g'})\subseteq A_{g+g'}$, for all $g, g'\in G$,
\item [c)] $\alpha, \beta : A\rightarrow A$ are even linear maps i.e. $\alpha(A_g)\subseteq A_{g}, \beta(A_g)\subseteq A_{g}$,
\item [d)] $\varepsilon : G\times G\rightarrow{\bf K}^*$ is a bicharacter.
\end{enumerate}
\end{definition}

If x and y are two homogeneous elements of degree $g$ and $g'$ respectively and $\varepsilon$ is a skew-symmetric bicharacter, 
then we shorten the notation by writing $\varepsilon(x, y)$ instead of $\varepsilon(g, g')$.
 \subsection{BiHom-Poisson color algebras}
In this subsection, we introduce  BiHom-Poisson color algebra and give some basic properties.

\begin{definition}(\cite{BIA})
  A Hom-Poisson color  algebra consists of a $G$-graded vector space $A$, a multiplication $\mu : A\times A\rightarrow A$,
 an even bilinear bracket $\{\cdot, \cdot\} : A\times A\rightarrow A$ and an even linear map $\alpha : A\rightarrow A$ such that :
\begin{enumerate}
 \item[1)] $(A, \mu, \varepsilon, \alpha)$ is a Hom-associative color algebra,
\item [2)]$(A, \{\cdot, \cdot\}, \varepsilon, \alpha)$ is a Hom-Lie color  algebra,
\item[3)]the Hom-Leibniz color identity
\begin{eqnarray}
 \{\alpha(x), \mu(y, z)\}=\mu(\{x, y\}, \alpha(z))+\varepsilon(x,y)\mu(\alpha(y), \{x, z\}),\label{cca}\nonumber
\end{eqnarray}
is satisfied for any $x, y, z\in \mathcal{H}(A)$.
\end{enumerate}
A Hom-Poisson color algebra $(A, \mu, \{\cdot, \cdot\}, \varepsilon, \alpha)$ in which $\mu$ is $\varepsilon$-commutative
is said to be a commutative Hom-Poisson color algebra.
 \end{definition}

The following definition is motivated by the one above.
\begin{definition}
  A BiHom-Poisson color  algebra consists of a $G$-graded vector space $P$, a multiplication $\mu : P\times P\rightarrow P$,
 a bilinear bracket $\{\cdot, \cdot\} : P\times P\rightarrow P$ and even linear maps $\alpha, \beta : P\rightarrow P$ such that :
\begin{enumerate}
 \item[1)] $(P, \mu, \varepsilon, \alpha, \beta)$ is a BiHom-associative color algebra of degree zero i.e. :
\begin{eqnarray}
\alpha\circ\beta&=&\beta\circ\alpha\\
 as_\mu(x, y, z)&=&\mu(\alpha(x), \mu(y, z))-\mu(\mu(x, y), \alpha(z))=0, 
\end{eqnarray}
for any $x, y, z\in \mathcal{H}(P)$.
\item [2)]$(P, \{\cdot, \cdot\}, \varepsilon, \alpha, \beta)$ is a BiHom-Lie color algebra i.e. :
\begin{eqnarray}
\alpha\circ\beta=\beta\circ\alpha
\end{eqnarray}
\begin{eqnarray}\alpha([x, y])=[\alpha(x), \alpha(y)], \quad \beta([x, y])=[\beta(x), \beta(y)]
\end{eqnarray}
\begin{eqnarray}
{[\beta(x), \alpha(y)]}=-\varepsilon(x, y+)[\beta(y), \alpha(x)],\; \mbox{\it (BiHom-$\varepsilon$-skew-simmetry color identity)}
\end{eqnarray}
\begin{eqnarray}
{\bf J}(x, y, z)= \oint_{x, y, z}\varepsilon(z, x+)[\beta^2(x), [\beta(y), \alpha(z)]]=0,\; \mbox{\it (BiHom-Jacobi color identity)}
\end{eqnarray}
for any $x, y, z\in \mathcal{H}(P)$, where $\oint_{x, y, z}$ means cyclic summation over ${x, y, z}$.
\item[3)]the {\it BiHom-Leibniz color identity}
\begin{eqnarray}
 \{\alpha(x), \mu(y, z)\}=\mu(\{x, y\}, \alpha(z))+\varepsilon(x,y)\mu(\alpha(y), \{x, z\}),
\end{eqnarray}
is satisfied for any $x, y, z\in \mathcal{H}(P)$.
\end{enumerate}
 \end{definition}

\begin{definition}
 \begin{enumerate}
  \item [a)] A  BiHom-Poisson color algebra $(P, \mu, \{\cdot, \cdot\}, \varepsilon, \alpha, \beta)$ in which $\mu$ 
is $\varepsilon$-commutative i.e.
$$\mu(x, y)=\varepsilon(x, y)\mu(y, x)$$
is said to be a commutative  BiHom-Poisson color algebra.
\item [b)] A  BiHom-Poisson color algebra $(P, \mu, \{\cdot, \cdot\}, \varepsilon, \alpha, \beta)$ in which $\alpha$ and $\beta$ are
 automorphisms is called regular  BiHom-Poisson color algebra.
 \end{enumerate}
\end{definition}

\begin{example}
 Any BiHom-associative color algebra \cite{ADR} can be seen as a BiHom-Poisson color algebra with the trivial BiHom-Lie color structure, and 
any BiHom-Lie color algebra \cite{ABA} can be seen as a  BiHom-Poisson color algebra with the trivial color BiHom-associative product. 
\end{example}

\begin{example}
Hom-Poisson color algebras \cite{BIA} or Poisson color algebras \cite{AD} are examples of BiHom-Poisson color algebras
by setting $\beta = id$ or $\alpha=id$ and $\beta=id$. If, in addition, $\varepsilon(x, y)=1$ or $\varepsilon(x, y)=(-1)^{|x||y|}$, then the
 BiHom-Poisson color
algebra is nothing but classical Poisson algebras \cite{} or Poisson superalgebras.
 BiHom-Poisson algebras \cite{XL} and BiHom-Poisson superalgebras are also obtained when $\varepsilon(x, y)=1$ and $\varepsilon(x, y)=(-1)^{|x||y|}$
 respectively. If, moreover $\beta = id$  we get Hom-Poisson algebras \cite{DYa} and Hom-Poisson superalgebras.
\end{example}

\begin{example}(\cite{IT})
 If $(P, [-, -], \cdot, \star, \ast, \varepsilon, \alpha)$ is a  Hom-post-Poisson color algebra. Then $(P, \odot, \{-,-\}, \varepsilon, \alpha)$ 
is a commutative Hom-Poisson color  algebra with
\begin{eqnarray}
x\odot y=x\star y+\varepsilon(x, y)y\star x+x\ast y \quad \mbox{and}\quad \{x, y\}=x\cdot y-\varepsilon(x, y)y\cdot x+[x, y]\nonumber
\end{eqnarray}
for any $x, y\in {P}$. .
\end{example}

The next result allows to construct a  BiHom-Poisson color algebra from a given one and a bijective map.

\begin{theorem}
 Let $(P', \cdot', [-, -]', \varepsilon, \alpha', \beta')$ be a  BiHom-Poisson color algebra and $P$ a graded vector space
 with a $\varepsilon$-skew-symmetric bilinear bracket of degree and even linear maps $\alpha$ and $\beta$. Let $f : L\rightarrow L'$ be an even bijective linear 
map such that $f\circ\alpha=\alpha'\circ f, f\circ\beta=\beta'\circ f$, 
$$f(x\cdot y)=f(x)\cdot' f(y)\quad\mbox{and}\quad f([x, y])=[f(x), f(y)]', \forall x, y\in \mathcal{H}(P).$$
Then $(P, \cdot, [-, -], \varepsilon, \alpha, \beta)$ is a  BiHom-Poisson color algebra.
\end{theorem}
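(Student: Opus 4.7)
The plan is the standard transfer-of-structure argument: since $f$ is a bijective even linear map intertwining $\alpha,\beta$ with $\alpha',\beta'$ and converting $\cdot$, $[-,-]$ into $\cdot'$, $[-,-]'$, each defining identity of a BiHom-Poisson color algebra on $P$ can be verified by applying $f$ to it, using the known identity on $P'$, and then using injectivity of $f$ to conclude. I would therefore proceed axiom by axiom.

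First I would dispose of the easy structural compatibilities. For $\alpha\circ\beta=\beta\circ\alpha$ on $P$, applying $f$ to $(\alpha\beta)(x)$ gives $\alpha'\beta'(f(x)) = \beta'\alpha'(f(x))$, hence by bijectivity of $f$ the commutation holds on $P$. Multiplicativity of $\alpha$ and $\beta$ with respect to $\cdot$ and $[-,-]$ is handled in the same way: for instance, $f(\alpha(x\cdot y))=\alpha'(f(x)\cdot' f(y))$ while $f(\alpha(x)\cdot\alpha(y))=\alpha'(f(x))\cdot'\alpha'(f(y))$; since $\alpha'$ is multiplicative on $P'$, these agree, and bijectivity of $f$ gives the identity on $P$. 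The same template disposes of $\beta$-multiplicativity and the $\varepsilon$-skew-symmetry of $[-,-]$, using that $f$ preserves the $G$-grading and that the scalar $\varepsilon(x,y)$ depends only on the degrees, which $f$ preserves.

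Next I would handle the three main identities: BiHom-associativity, BiHom-Jacobi, and the BiHom-Leibniz identity. For BiHom-associativity, apply $f$ to $as_\mu(x,y,z)$; using the intertwining relations turns this into $as_{\mu'}(f(x),f(y),f(z))=0$, so injectivity yields $as_\mu(x,y,z)=0$. For the BiHom-Jacobi identity, apply $f$ cyclically: the image of each summand $\varepsilon(z,x)[\beta^2(x),[\beta(y),\alpha(z)]]$ under $f$ becomes $\varepsilon(f(z),f(x))[\beta'^2(f(x)),[\beta'(f(y)),\alpha'(f(z))]]'$, and summing gives the Jacobi sum on $P'$, which vanishes. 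The BiHom-Leibniz identity is treated identically, applying $f$ to both sides and matching with the identity on $P'$.

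The main (though still modest) obstacle is purely bookkeeping: one must check carefully that the $\varepsilon$ factors in the skew-symmetry, Jacobi, and Leibniz identities survive transport by $f$. This works because $f$ is even, i.e.\ preserves the $G$-grading, so $\varepsilon(x,y)=\varepsilon(f(x),f(y))$ for homogeneous $x,y\in\mathcal H(P)$, and every identity extends by bilinearity from homogeneous elements. No calculation needs to be written out beyond displaying one representative case (say BiHom-Jacobi) in detail; the remaining axioms follow by the same mechanism, which is why a short proof will suffice.
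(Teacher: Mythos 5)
Your proposal is correct and follows essentially the same transfer-of-structure argument as the paper: the paper verifies the BiHom-Leibniz identity by writing the left-hand side as $f^{-1}$ of its image in $P'$, applying the identity there, and pulling back, while you push forward with $f$ and conclude by injectivity, which is the same computation read in the opposite direction. Like the paper, you write out one representative identity in full and note that the remaining axioms (commutation of $\alpha,\beta$, multiplicativity, skew-symmetry, associativity, Jacobi) follow by the identical mechanism, using that $f$ is even so the $\varepsilon$-factors are unchanged.
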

\begin{proof}
BiHom-associativity and BiHom-Jacobi color identities are checked as in the following BiHom-Leibniz color identity.
Then, for any $x, y, z\in \mathcal{H}(P)$,
\begin{eqnarray}
 [\beta\alpha(x), y\cdot z]
&=&f^{-1}[f(\beta\alpha(x)), f(y\cdot z)]'\nonumber\\
&=&f^{-1}[f(\beta\alpha(x)), f\Big(f^{-1}\Big(f(y)\cdot' f(z)\Big)\Big)]'\nonumber\\
&=&f^{-1}[\beta'\alpha'(f(x)), f(y)\cdot' f(z)]'\nonumber\\
&=&f^{-1}\Big([f(\beta(x), f(y)]'\cdot'\beta'(f(z))+\varepsilon(x, y) \beta'(f(y))\cdot' [f(\alpha(x))\cdot' f(z)]'\Big)\nonumber\\
&=&f^{-1}\Big(f\Big(f^{-1}[f(\beta(x)), f(y)]'\Big)\cdot'\beta'(f(z))\Big)\nonumber\\
&&\quad+\varepsilon(x, y) f^{-1}\Big(\beta'(f(y))\cdot' f\Big(f^{-1}[f(\alpha(x))\cdot' f(z)]'\Big)\Big)\nonumber\\
&=&f^{-1}\Big(f([\beta(x), y])\cdot'f(\beta(z))\Big)
+\varepsilon(x, y) f^{-1}\Big(f(\beta(y))\cdot' f([\alpha(x)\cdot z])\Big)\nonumber\\
&=&[\beta(x), y]\cdot\beta(z)+\varepsilon(x, y) \beta(y)\cdot [\alpha(x)\cdot z]\nonumber.
\end{eqnarray}
This gives the conclusion.
\end{proof}

The following theorem asserts that the commutator of any BiHom-associative color algebra gives rise to BiHom-Poisson color algebra.
\begin{theorem}
 Let $(A, \cdot, \varepsilon, \alpha, \beta )$ be a regular BiHom-associative color algebra. Then
$$P(A)=(A, \cdot, [-, -], \varepsilon, \alpha, \beta)$$
is a regular BiHom-Poisson color algebra, where
$$[x, y]=x\cdot y-\varepsilon(x, y)\alpha^{-1}\beta(y)\cdot\alpha\beta^{-1}(x),$$
for any $x, y\in\mathcal{H}(A).$
\end{theorem}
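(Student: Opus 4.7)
The plan is to verify in turn the three axioms of a BiHom-Poisson color algebra: the BiHom-associativity of $\cdot$, the BiHom-Lie color structure of $[-,-]$, and the BiHom-Leibniz identity relating the two. BiHom-associativity is given, and $\alpha\circ\beta=\beta\circ\alpha$ is inherited from the BiHom-associative structure, so only the bracket axioms and the Leibniz compatibility require work. Regularity is used throughout, since the bracket involves $\alpha^{-1}\beta$ and $\alpha\beta^{-1}$.

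First I would check that $\alpha$ and $\beta$ are morphisms of the bracket. Applying $\alpha$ to $[x,y]=x\cdot y-\varepsilon(x,y)\alpha^{-1}\beta(y)\cdot\alpha\beta^{-1}(x)$ and using $\alpha\beta=\beta\alpha$ together with the multiplicativity of $\alpha$ on $\cdot$, one gets $\alpha(x)\cdot\alpha(y)-\varepsilon(x,y)\beta(y)\cdot\alpha^2\beta^{-1}(x)$, which is precisely $[\alpha(x),\alpha(y)]$; the argument for $\beta$ is symmetric. Next, the BiHom-$\varepsilon$-skew-symmetry is a direct calculation: expanding $[\beta(x),\alpha(y)]$ gives $\beta(x)\cdot\alpha(y)-\varepsilon(x,y)\beta(y)\cdot\alpha(x)$, while $-\varepsilon(x,y)[\beta(y),\alpha(x)]$ equals $-\varepsilon(x,y)\beta(y)\cdot\alpha(x)+\varepsilon(x,y)\varepsilon(y,x)\beta(x)\cdot\alpha(y)$, and the two coincide because $\varepsilon(x,y)\varepsilon(y,x)=1$.

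The main obstacle is the BiHom-Jacobi identity. The strategy is the standard one for commutator brackets of associative products: expand each of the three cyclic terms $\varepsilon(z,x)[\beta^2(x),[\beta(y),\alpha(z)]]$ using the definition of $[-,-]$, producing four monomials per summand and twelve monomials in total. Then apply BiHom-associativity $\mu(\alpha(u),\mu(v,w))=\mu(\mu(u,v),\alpha(w))$ (and its consequence obtained from regularity, replacing $u,v,w$ by $\alpha^{-1}(u),v,\beta^{-1}(w)$) to bring every term into a common normal form. The combinatorics mirror the proof that the commutator of a BiHom-associative algebra is a BiHom-Lie algebra, and the twelve monomials cancel in pairs once the bicharacter identities $\varepsilon(z,x)\varepsilon(x,y)=\varepsilon(z,x{+}y)$ and cyclic variants are used.

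Finally, for the BiHom-Leibniz identity $\{\alpha(x),\mu(y,z)\}=\mu(\{x,y\},\alpha(z))+\varepsilon(x,y)\mu(\alpha(y),\{x,z\})$, I would expand the left-hand side as $\alpha(x)\cdot(y\cdot z)-\varepsilon(x,y{+}z)\alpha^{-1}\beta(y\cdot z)\cdot\alpha\beta^{-1}\alpha(x)$ and the two terms on the right analogously, then repeatedly invoke BiHom-associativity together with $\alpha\beta=\beta\alpha$ to match monomials. The bicharacter factors line up using $\varepsilon(x,y{+}z)=\varepsilon(x,y)\varepsilon(x,z)$. Once this is established, all three axioms hold and the conclusion follows; regularity of the resulting structure is inherited from that of $(A,\cdot,\varepsilon,\alpha,\beta)$.
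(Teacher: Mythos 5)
Your checks of BiHom-skew-symmetry and of the compatibility of $\alpha,\beta$ with the bracket are correct, and re-deriving the BiHom-Jacobi identity by expanding the twelve monomials is a legitimate (though here only sketched) alternative to the paper, which does not redo that work at all: it simply invokes Proposition 1.6 of \cite{ABA} for the whole BiHom-Lie color structure and devotes the proof exclusively to the compatibility condition. Note, however, that the form of BiHom-associativity you quote, $\mu(\alpha(u),\mu(v,w))=\mu(\mu(u,v),\alpha(w))$, reproduces a typo in the paper's definition; the identity actually used in every computation of the paper (including the proof of this theorem, e.g.\ the step $\beta(y)\cdot(\alpha(x)\cdot z)=(\alpha^{-1}\beta(y)\cdot\alpha(x))\cdot\beta(z)$) is $\alpha(u)\cdot(v\cdot w)=(u\cdot v)\cdot\beta(w)$, and the twelve-term cancellation for the Jacobi identity needs this $\beta$-twisted form.

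The genuine gap is in your plan for the BiHom-Leibniz identity. You propose to verify it literally as $\{\alpha(x),y\cdot z\}=\{x,y\}\cdot\alpha(z)+\varepsilon(x,y)\,\alpha(y)\cdot\{x,z\}$, but for the commutator bracket $[x,y]=x\cdot y-\varepsilon(x,y)\alpha^{-1}\beta(y)\cdot\alpha\beta^{-1}(x)$ this identity fails whenever $\alpha\neq\beta$: already the leading monomials do not match, since BiHom-associativity gives $\alpha(x)\cdot(y\cdot z)=(x\cdot y)\cdot\beta(z)$ on your left-hand side while your right-hand side starts with $(x\cdot y)\cdot\alpha(z)$, and the remaining $\beta$-twisted terms cannot compensate in general. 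The identity that does hold, and the one the paper actually proves (consistently with every other Leibniz computation in the text, e.g.\ in Theorems \ref{tw} and \ref{tens}), is the twisted version $[\alpha\beta(x),y\cdot z]=[\beta(x),y]\cdot\beta(z)+\varepsilon(x,y)\,\beta(y)\cdot[\alpha(x),z]$; with this target the expansion is short, the cross terms $\varepsilon(x,y)(\alpha^{-1}\beta(y)\cdot\alpha(x))\cdot\beta(z)$ cancel, and the two remaining monomials on each side coincide by BiHom-associativity. As written, your verification of the compatibility condition cannot be completed; you must replace the target identity by its $\alpha\beta$-twisted form (treating the $\alpha$-only statement in the paper's definition as a misprint inherited from the Hom case) and use $\alpha(x)\cdot(y\cdot z)=(x\cdot y)\cdot\beta(z)$ throughout.
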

\begin{proof}
It is proved in (Proposition 1.6 \cite{ABA}) that any BiHom-associative color algebra carries a structure of a BiHom-Lie color algebra. Thus, we only
need to check the compatibility condition.
For any homogeneous elements $x, y, z\in A$,
  \begin{eqnarray}
[\alpha\beta(x), y\cdot z]&=& \alpha\beta(x)(yz)-\varepsilon(x, y+z)\alpha^{-1}\beta(yz)\alpha\beta^{-1}\alpha\beta(x)  \nonumber\\
&=& \alpha\beta(x)(yz)-\varepsilon(x, y+z)(\alpha^{-1}\beta(y) \cdot\alpha^{-1}\beta(z))\alpha^2(x).  \nonumber
  \end{eqnarray}
  \begin{eqnarray}
[\beta(x), y]\cdot\beta(z)&=&\Big(\beta(x)\cdot y-\varepsilon(x, y)\alpha^{-1}\beta(y)\cdot\alpha\beta^{-1}\beta(x)\Big)\cdot\beta(z)\nonumber\\
&=&(\beta(x)\cdot y)\cdot\beta(z) -\varepsilon(x, y)(\alpha^{-1}\beta(y)\cdot\alpha(x))\cdot\beta(z)\nonumber.
  \end{eqnarray}
and
  \begin{eqnarray}
\beta(y)\cdot[\alpha(x), z]&=&\beta(y)\cdot \Big(\alpha(x)\cdot z-\varepsilon(x, z)\alpha^{-1}\beta(z)\cdot\alpha\beta^{-1}\alpha(x)\Big)\nonumber\\
&=&\beta(y)\cdot (\alpha(x)\cdot z)-\varepsilon(x, z)\beta(y)\cdot(\alpha^{-1}\beta(z)\cdot\alpha^2\beta^{-1}(x))   \nonumber\\
&=&\alpha(\alpha^{-1}\beta(y))\cdot (\alpha(x)\cdot z)
-\varepsilon(x, z)\alpha(\alpha^{-1}(\beta(y))\cdot(\alpha^{-1}\beta(z)\cdot\alpha^2\beta^{-1}(x))   \nonumber\\
&=&(\alpha^{-1}\beta(y)\cdot \alpha(x))\cdot \beta(z)
-\varepsilon(x, z)(\alpha^{-1}\beta(y)\cdot\alpha^{-1}\beta(z))\cdot\alpha^2(x) \nonumber.
  \end{eqnarray}

Therefore,
  \begin{eqnarray}
[\alpha\beta(x), yz]= [\beta(x), y]\cdot\beta(z)+\varepsilon(x, y)\beta(y)\cdot[\alpha(x), z]\nonumber.  
  \end{eqnarray}
This concludes the proof.
\end{proof}

\section{Main results}
This section is devoted to an exposition of various constructions of  BiHom-Poisson color algebras.
\subsection{BiHom-Poisson color algebras arisng from twisting}
The following theorem asserts that  BiHom-Poisson color algebra turn to another one via morphisms.
\begin{theorem}\label{tw}
 Let $(P, \mu, [-, -], \varepsilon, \alpha, \beta )$ be a  BiHom-Poisson color algebra and $\alpha', \beta' : P\rightarrow P$ two even morphisms of 
BiHom-Poisson color such that any two of the maps $\alpha, \alpha', \beta, \beta'$ commute. Then
$$P_{(\alpha', \beta')}=(P, \ast=\mu(\alpha'\otimes\beta'), \{-, -\}:=[-,-](\alpha'\otimes\beta'), \varepsilon, \alpha\alpha', \beta\beta')$$
is a  BiHom-Poisson color algebra. 
\end{theorem}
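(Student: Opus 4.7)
The plan is to verify, in sequence, the three defining axioms of a BiHom-Poisson color algebra for the twisted datum $(P,\ast,\{-,-\},\varepsilon,\alpha\alpha',\beta\beta')$: first BiHom-associativity of $\ast$, then the BiHom-Lie axioms for $\{-,-\}$, and finally the BiHom-Leibniz compatibility. The underlying strategy in every case is the same: starting from the twisted expression, expand using $x\ast y=\mu(\alpha'(x),\beta'(y))$ and $\{x,y\}=[\alpha'(x),\beta'(y)]$; push $\alpha'$ and $\beta'$ through $\mu$ and $[-,-]$ using the hypothesis that $\alpha'$ and $\beta'$ are morphisms of BiHom-Poisson color algebras; then use pairwise commutation of $\alpha,\alpha',\beta,\beta'$ to collect powers of the twist maps on each argument separately; finally, recognize the resulting expression as a known identity (BiHom-associativity, BiHom-Jacobi, or BiHom-Leibniz) applied to suitably twisted inputs of the original structure, which vanishes by hypothesis.

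The first step is to note that $\alpha\alpha'\circ\beta\beta'=\beta\beta'\circ\alpha\alpha'$ is immediate from pairwise commutation, and similarly $\alpha\alpha'$ and $\beta\beta'$ remain morphisms of both $\mu$ and $[-,-]$ (hence of $\ast$ and $\{-,-\}$). BiHom-associativity of $\ast$ then reduces, after expansion, to $\mu(\alpha(u),\mu(v,w))-\mu(\mu(u,v),\alpha(w))=0$ applied to inputs of the form $(\alpha'^{?}\beta'^{?}(x),\alpha'^{?}\beta'^{?}(y),\alpha'^{?}\beta'^{?}(z))$, which is $as_\mu$ for the original algebra and so vanishes.

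For BiHom-$\varepsilon$-skew-symmetry, $\{\beta\beta'(x),\alpha\alpha'(y)\}=[\alpha'\beta\beta'(x),\beta'\alpha\alpha'(y)]=[\beta(\alpha'\beta'(x)),\alpha(\alpha'\beta'(y))]$ by commutation, and then applying the original skew-symmetry gives the desired identity. The BiHom-Jacobi identity for $\{-,-\}$ is similar: expand each nested bracket, use that $\alpha'$ and $\beta'$ are morphisms of $[-,-]$ to move them inside, commute the maps so each argument carries the expected word $\alpha^{?}\beta^{?}\alpha'^{?}\beta'^{?}$, and recognize the cyclic sum as the original Jacobi sum ${\bf J}$ evaluated on twisted arguments. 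Finally, the BiHom-Leibniz identity is proved by expanding $\{\alpha\alpha'(x),y\ast z\}$ and both right-hand terms; the morphism property on $\alpha'$ and $\beta'$ consolidates the inner twists so that the equation to be checked is exactly the original BiHom-Leibniz identity applied to $\alpha'(x)$, $\alpha'(y)$ (or $\beta'(y)$), $\beta'(z)$ with appropriately permuted twist decorations.

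The main obstacle is purely bookkeeping: keeping track of which composition of $\alpha,\alpha',\beta,\beta'$ lands on each variable after every expansion. The pairwise commutation hypothesis is exactly what makes this manageable, since it lets us treat the word of twist maps attached to each factor as a multiset and freely rearrange it into the shape required by the original axiom. Once this alignment is set up, no further structural input is needed beyond the morphism property of $\alpha'$ and $\beta'$ and the three original axioms of $(P,\mu,[-,-],\varepsilon,\alpha,\beta)$.
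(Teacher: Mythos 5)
Your proposal is correct and follows essentially the same route as the paper's proof: expand the twisted operations $\ast=\mu(\alpha'\otimes\beta')$ and $\{-,-\}=[-,-](\alpha'\otimes\beta')$, push $\alpha',\beta'$ through $\mu$ and $[-,-]$ via the morphism hypothesis, regroup the twist maps by pairwise commutation, and invoke the original BiHom axioms on twisted arguments, which is exactly the computation the paper carries out for the BiHom-Leibniz identity (leaving the associativity and Lie parts, which you sketch in the same spirit, to the reader). No gap; the extra remarks on $\alpha\alpha'\beta\beta'=\beta\beta'\alpha\alpha'$ and multiplicativity of the new structure maps are consistent with what the definition requires.
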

\begin{proof}
We only prove the BiHom-Leibniz color identity and leave the BiHom-associativity and BiHom-Lie identities to the reader.
 \begin{eqnarray}
 \{\alpha\alpha'\beta\beta'(x), y\ast z\}&=&[\alpha\alpha'^2\beta\beta'(x), \beta'(\alpha'(y)\beta'(z))]\nonumber\\
&=&[\alpha\alpha'^2\beta\beta', \alpha'\beta'(y)\beta'^2(z)]\nonumber\\
&=&[\alpha\beta(\alpha'^2\beta')(x), \alpha'\beta'(y)\beta'^2(z)]\nonumber\\
&=&[\beta(\alpha'^2\beta'(x)), \alpha'\beta'(y)]\beta\beta'^2(z)
+\varepsilon(x, y)\beta(\alpha'\beta'^2(y))[\alpha(\alpha'^2\beta'(x)), \beta'^2(z)]\nonumber\\
&=&\alpha'[\alpha'(\beta\beta'(x)), \beta'(y)]\beta'(\beta\beta'(z))
+\varepsilon(x, y)\alpha'(\beta\beta'(y))\beta'[\alpha'(\alpha\alpha'(x)), \beta'(z)]\nonumber\\
&=&\{\beta\beta'(x), y\}\ast\beta\beta'(z)+\varepsilon(x, y)\beta\beta'(y)\ast\{\alpha\alpha'(x), z\}\nonumber.
 \end{eqnarray}
This ends the proof.
\end{proof}

\begin{corollary}
 Let $(P, \mu, [-, -], \varepsilon, \alpha, \beta )$ be a  BiHom-Poisson color algebra. Then
$$(P, \mu\circ(\alpha^n\otimes\beta^n), [-, -]\circ(\alpha^n\otimes\beta^n), \alpha^{n+1}, \beta^{n+1})$$
is also a  BiHom-Poisson color algebra.
\end{corollary}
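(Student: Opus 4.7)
The plan is to deduce this corollary as an immediate specialization of Theorem \ref{tw} by setting $\alpha' := \alpha^n$ and $\beta' := \beta^n$. With these choices, the twisted product $\mu \circ (\alpha' \otimes \beta')$ and twisted bracket $[-,-]\circ(\alpha'\otimes\beta')$ appearing in Theorem \ref{tw} coincide exactly with those in the corollary, while the new structure maps $\alpha\circ\alpha' = \alpha^{n+1}$ and $\beta\circ\beta' = \beta^{n+1}$ also match. So the only task is to check that the hypotheses of Theorem \ref{tw} are satisfied.

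Two conditions must be verified. First, any two of the maps $\alpha,\alpha^n,\beta,\beta^n$ should commute: powers of the same map commute trivially, and a straightforward induction on exponents, starting from the given relation $\alpha\circ\beta = \beta\circ\alpha$, gives $\alpha^{i}\circ\beta^{j} = \beta^{j}\circ\alpha^{i}$ for all nonnegative integers $i,j$. Second, $\alpha^n$ and $\beta^n$ must be even morphisms of the BiHom-Poisson color structure. Since $\alpha$ and $\beta$ are even and preserve the bracket (by the BiHom-Lie axioms) and the associative product (in their role as structure maps of the BiHom-associative component), an easy induction on $n$ propagates these properties to $\alpha^n$ and $\beta^n$; evenness ensures automatic compatibility with the bicharacter $\varepsilon$.

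Once both conditions are in place, Theorem \ref{tw} applies verbatim and produces the desired BiHom-Poisson color algebra structure on $(P, \mu\circ(\alpha^n\otimes\beta^n), [-,-]\circ(\alpha^n\otimes\beta^n), \varepsilon, \alpha^{n+1}, \beta^{n+1})$. The main subtlety, should one spell out the induction in detail, is only bookkeeping: one must simultaneously track preservation of $\mu$ and of $[-,-]$ by the iterates, and then invoke the commutation of $\alpha$ with $\beta$ to reorder compositions when writing $\alpha^{n}\circ\alpha = \alpha^{n+1}$ and $\beta^{n}\circ\beta = \beta^{n+1}$. There is no genuine obstacle, and the corollary follows from the theorem in essentially one line after these two hypotheses are confirmed.
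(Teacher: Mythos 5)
Your proposal is correct and follows exactly the paper's route: the paper's proof is precisely the one-line specialization $\alpha'=\alpha^n$, $\beta'=\beta^n$ in Theorem \ref{tw}. Your additional verification that powers commute and remain (multiplicative) morphisms is just the bookkeeping the paper leaves implicit, so there is nothing further to add.
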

\begin{proof}
 It suffises to take $\alpha'=\alpha^n$ and $\beta'=\beta^n$ in Theorem \ref{tw}.
\end{proof}

By taking $\alpha'=\beta'=\alpha\neq id$ in Theorem \ref{tw}, we get the following corollary.

 \begin{corollary}
 Let $(P, \mu, [-, -], \varepsilon, \alpha)$ be a Hom-Poisson color algebra. Then
$$(P, \mu\circ(\alpha\otimes\beta), [-, -]\circ(\alpha\otimes\beta), \alpha^{2}, \beta^{2})$$
is also a Hom-Poisson color algebra.
\end{corollary}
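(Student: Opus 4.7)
The plan is to observe that this corollary is nothing more than the specialization of Theorem \ref{tw} indicated in the sentence preceding its statement, once we interpret a Hom-Poisson color algebra as a BiHom-Poisson color algebra. Concretely, a Hom-Poisson color algebra $(P,\mu,[-,-],\varepsilon,\alpha)$ is already a BiHom-Poisson color algebra with the second twisting map taken equal to the first, namely $(P,\mu,[-,-],\varepsilon,\alpha,\beta)$ with $\beta:=\alpha$: the condition $\alpha\circ\beta=\beta\circ\alpha$ is then trivial, the BiHom-associative, BiHom-Lie and BiHom-Leibniz color identities collapse to the Hom-versions that are assumed to hold, and the BiHom-$\varepsilon$-skew-symmetry reduces to the Hom-$\varepsilon$-skew-symmetry.

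Next, I would verify that $\alpha$ itself is a morphism of BiHom-Poisson color algebras on $(P,\mu,[-,-],\varepsilon,\alpha,\alpha)$. This amounts to three checks: (i) $\alpha$ is an even linear map (given); (ii) $\alpha$ commutes with the twisting maps (trivially, since both are $\alpha$); and (iii) $\alpha$ multiplicative with respect to both $\mu$ and $[-,-]$, which is already part of the Hom-Poisson structure. With $\alpha':=\alpha$ and $\beta':=\alpha$, the hypothesis of Theorem \ref{tw} that any two of $\alpha,\alpha',\beta,\beta'$ commute holds automatically.

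Applying Theorem \ref{tw} with this choice then yields that
\[
\bigl(P,\ \mu\circ(\alpha\otimes\alpha),\ [-,-]\circ(\alpha\otimes\alpha),\ \varepsilon,\ \alpha\cdot\alpha,\ \alpha\cdot\alpha\bigr)
\]
is a BiHom-Poisson color algebra, i.e.\ the tuple with both twists equal to $\alpha^{2}$ (which, read in the Hom-convention where the two twists are identified, is the claimed statement with $\beta$ understood as $\alpha$).

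There is essentially no obstacle; the only subtlety is the bookkeeping between the Hom and BiHom frameworks, namely identifying the single Hom-twist $\alpha$ with a pair $(\alpha,\alpha)$ of BiHom-twists before invoking Theorem \ref{tw}, and reading the $\beta^{n+1}$ appearing in the conclusion as $\alpha^{n+1}$ (with $n=1$). Once this identification is made, the corollary is an immediate consequence of the theorem and requires no further calculation.
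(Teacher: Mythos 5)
Your proposal is correct and takes exactly the paper's route: the paper obtains this corollary simply by taking $\alpha'=\beta'=\alpha$ in Theorem \ref{tw}, which is precisely your specialization, with the Hom-Poisson color algebra regarded as a BiHom-Poisson color algebra whose two twists both equal $\alpha$ (so the $\beta$ appearing in the statement is read as $\alpha$). Your additional bookkeeping (checking that $\alpha$ is a morphism and that the resulting twists are $\alpha^{2}=\beta^{2}$) is just the implicit content of the paper's one-line justification.
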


Any regular Hom-Poisson color algebra give rises to Poisson color algebra as stated in the next corollary.
\begin{corollary}
 Let $(P, \mu, [-, -], \varepsilon, \alpha)$ be a regular Hom-Poisson color algebra. Then
$$(P, \mu\circ(\alpha^{-1}\otimes\beta^{-1}), [-, -]\circ(\alpha^{-1}\otimes\beta^{-1}))$$
is also a Poisson color algebra.
\end{corollary}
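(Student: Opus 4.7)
The plan is to apply Theorem \ref{tw} to the regular Hom-Poisson color algebra viewed as a regular BiHom-Poisson color algebra. A Hom-Poisson color algebra $(P,\mu,[-,-],\varepsilon,\alpha)$ is, by definition, a BiHom-Poisson color algebra with $\beta=\mathrm{id}$ (so in particular $\beta^{-1}=\mathrm{id}$). Regularity then says that $\alpha$ is an automorphism of this BiHom-Poisson color structure.

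First, I would observe that since $\alpha$ preserves both $\mu$ and $[-,-]$, its inverse $\alpha^{-1}$ does as well, so $\alpha^{-1}$ is an even morphism of BiHom-Poisson color algebras; of course $\beta^{-1}=\mathrm{id}$ is trivially such a morphism. Next I would check the commutation hypothesis of Theorem \ref{tw} for the quadruple $\alpha,\alpha^{-1},\mathrm{id},\mathrm{id}$: the identity commutes with everything, and $\alpha\circ\alpha^{-1}=\alpha^{-1}\circ\alpha=\mathrm{id}$, so all pairwise commutations are automatic.

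With these verifications, Theorem \ref{tw} applied with $\alpha':=\alpha^{-1}$ and $\beta':=\beta^{-1}$ produces the BiHom-Poisson color algebra
\[
\bigl(P,\ \mu\circ(\alpha^{-1}\otimes\beta^{-1}),\ [-,-]\circ(\alpha^{-1}\otimes\beta^{-1}),\ \varepsilon,\ \alpha\alpha^{-1},\ \beta\beta^{-1}\bigr).
\]
Since $\alpha\alpha^{-1}=\mathrm{id}$ and $\beta\beta^{-1}=\mathrm{id}$, the two twisting maps collapse to the identity, and a BiHom-Poisson color algebra with both twisting maps equal to the identity is nothing but an ordinary Poisson color algebra. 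This gives the desired conclusion.

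No serious obstacle is expected, since the heavy lifting (BiHom-associativity, BiHom-Jacobi, and BiHom-Leibniz identities) has already been absorbed into Theorem \ref{tw}. The only point one must be careful about is that the regularity hypothesis is precisely what is needed for $\alpha^{-1}$ to exist and to be a morphism; without it the construction fails. Once that is noted, the proof reduces to a one-line specialization of the theorem.
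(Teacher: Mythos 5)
Your proof is correct and follows the paper's intended argument: the corollary is presented as a direct specialization of Theorem \ref{tw}, obtained exactly as you do by taking $\alpha'=\alpha^{-1}$ and $\beta'=\beta^{-1}$ (with $\beta=\mathrm{id}$ in the Hom setting), with regularity guaranteeing that these inverses exist and are even morphisms commuting with the structure maps. Your final observation that $\alpha\alpha^{-1}=\beta\beta^{-1}=\mathrm{id}$, so the twisted object is an ordinary Poisson color algebra, is precisely the point of the statement.
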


 \begin{corollary}
 Let $(P, \mu, [-, -], \varepsilon)$ be a Poisson color algebra. Then
$$(P, \mu\circ(\alpha\otimes\beta), [-, -]\circ(\alpha\otimes\beta), \alpha, \beta)$$
is a  BiHom-Poisson color algebra.
\end{corollary}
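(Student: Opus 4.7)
The statement is an immediate specialization of Theorem \ref{tw}, so my plan is to reduce to that theorem rather than redo the BiHom-Leibniz computation. First I would observe that a Poisson color algebra $(P,\mu,[-,-],\varepsilon)$ is literally a BiHom-Poisson color algebra with both twisting maps equal to the identity, that is, $(P,\mu,[-,-],\varepsilon,\mathrm{id},\mathrm{id})$. This is the base object on which the twisting will act, and it is where the implicit hypothesis that $\alpha$ and $\beta$ are even morphisms of the underlying Poisson color algebra enters: they must preserve both $\mu$ and $[-,-]$, which for an ordinary (non-BiHom) structure is the usual notion of morphism.

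Next I would check that $\alpha$ and $\beta$ qualify as even morphisms of the BiHom-Poisson color algebra $(P,\mu,[-,-],\varepsilon,\mathrm{id},\mathrm{id})$ in the sense needed by Theorem \ref{tw}. The morphism conditions involving $\mu$ and $[-,-]$ hold by assumption, and the commutation with the twisting maps is automatic, since anything commutes with $\mathrm{id}$; in particular every pair chosen from $\{\mathrm{id},\alpha,\mathrm{id},\beta\}$ commutes, provided only that $\alpha$ and $\beta$ themselves commute, which I would either require as part of the hypothesis or take as implicit from the phrasing carried over from the preceding corollaries.

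Then I would invoke Theorem \ref{tw} with $\alpha'=\alpha$ and $\beta'=\beta$. The conclusion of that theorem produces
\[
P_{(\alpha,\beta)} = \bigl(P,\ \mu\circ(\alpha\otimes\beta),\ [-,-]\circ(\alpha\otimes\beta),\ \varepsilon,\ \mathrm{id}\circ\alpha,\ \mathrm{id}\circ\beta\bigr),
\]
and since $\mathrm{id}\circ\alpha=\alpha$ and $\mathrm{id}\circ\beta=\beta$, this is exactly the quintuple in the corollary's statement. No further computation is required.

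The only potentially subtle point — and the one I would flag as the real content to verify — is the morphism hypothesis on $\alpha$ and $\beta$. Everything else (BiHom-associativity, BiHom-$\varepsilon$-skew-symmetry, BiHom-Jacobi, and BiHom-Leibniz) is already handled once and for all inside Theorem \ref{tw}, so the proof collapses to \emph{``apply Theorem \ref{tw} with $\alpha=\beta=\mathrm{id}$ on the base and $\alpha'=\alpha,\ \beta'=\beta$ as the twisting morphisms.''}
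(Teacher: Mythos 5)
Your proposal is correct and is exactly the route the paper intends: the corollary is a direct specialization of Theorem \ref{tw}, viewing the Poisson color algebra as the BiHom-Poisson color algebra $(P,\mu,[-,-],\varepsilon,\mathrm{id},\mathrm{id})$ and taking $\alpha'=\alpha$, $\beta'=\beta$. You are also right to flag that the hypotheses that $\alpha$ and $\beta$ are even morphisms preserving $\mu$ and $[-,-]$ and commute with each other are left implicit in the paper's statement but are needed to invoke the theorem.
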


\subsection{Extensions of  BiHom-Poisson color algebras}
In this subsection, we give extensions of a given  BiHom-Poisson color algebra by a field, by commutative associative color algebras or by 
another BiHom-Poisson color algebra.

\begin{theorem}
Let $(P, \cdot, [-, -], \varepsilon, \alpha)$ be a  BiHom-Poisson color algebra over a field $\mathbb{K}$ and $\hat{\mathbb{K}}$
 an extension of $\mathbb{K}$.  Then, the graded $\mathbb{K}$-vector space 
 $$\hat{\mathbb{K}}\otimes P=\sum_{g\in G}(\mathbb{K}\otimes P)_g=\sum_{g\in G}\mathbb{K}\otimes P_g$$
is an  BiHom-Poisson color algebra with :\\ 
the associative product
$$ (r\otimes x)\cdot'(s\otimes y):=rs\otimes (x\cdot y),$$
the bracket
$$[r\otimes x,  s\otimes y]'=rs\otimes [x, y],$$
the even linear maps
$$\alpha'(r\otimes x):=r\otimes \alpha(x) \quad\mbox{and}\quad \beta'(r\otimes x):=r\otimes \beta(x),$$
and the bicharacter
$$\varepsilon(r+x, s+y)=\varepsilon(x, y), \forall r, s\in\hat{\mathbb{K}} , \forall x, y\in \mathcal{H}(P).$$
\end{theorem}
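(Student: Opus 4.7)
The plan is to verify each defining axiom of a BiHom-Poisson color algebra on $\hat{\mathbb{K}}\otimes P$ directly, exploiting the fact that the new structure maps $\cdot'$, $[-,-]'$, $\alpha'$, $\beta'$ act trivially on the scalar factor and as the given maps on the $P$-factor. With the grading $(\hat{\mathbb{K}}\otimes P)_g=\hat{\mathbb{K}}\otimes P_g$ (scalars contributing degree $0$ to the bicharacter), every identity will collapse to the corresponding identity in $P$ multiplied by a product of scalars from $\hat{\mathbb{K}}$; since this scalar product is independent of the order of the factors, no new combinatorics are introduced.

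First I would note that $\alpha'\beta'=\beta'\alpha'$ follows immediately from $\alpha\beta=\beta\alpha$, and that $\varepsilon$ is unchanged and hence remains a bicharacter on $G$. Then, for BiHom-associativity applied to homogeneous elements $r\otimes x,\,s\otimes y,\,t\otimes z$,
\[
(r\otimes x)\cdot'\bigl((s\otimes y)\cdot'(t\otimes z)\bigr)=r(st)\otimes\bigl(x\cdot(y\cdot z)\bigr),
\]
and the symmetric expression for the opposite association; after inserting $\alpha'$ as in the BiHom-associativity axiom, the identity reduces to BiHom-associativity in $P$ combined with the associativity and commutativity of scalar multiplication in $\hat{\mathbb{K}}$. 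The BiHom-Lie color axioms for $[-,-]'$ — the multiplicativity of $\alpha'$ and $\beta'$, the $\varepsilon$-skew-symmetry, and the BiHom-Jacobi color identity — are verified in the same spirit: in each case the scalar coefficient is symmetric in the scalars involved, and the $P$-factor identity is exactly the corresponding axiom assumed for $P$.

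For the BiHom-Leibniz color identity I would compute
\begin{eqnarray*}
\{\alpha'(r\otimes x),\,(s\otimes y)\cdot'(t\otimes z)\}'
&=& r(st)\otimes\{\alpha(x),\,y\cdot z\}\\
&=& r(st)\otimes\bigl(\{x,y\}\cdot\alpha(z)+\varepsilon(x,y)\,\alpha(y)\cdot\{x,z\}\bigr)\\
&=& \{r\otimes x,\,s\otimes y\}'\cdot'\alpha'(t\otimes z)+\varepsilon(x,y)\,\alpha'(s\otimes y)\cdot'\{r\otimes x,\,t\otimes z\}',
\end{eqnarray*}
which is the required identity once one notes that $\varepsilon(r\otimes x,\,s\otimes y)=\varepsilon(x,y)$ by convention.

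The calculations are essentially a transcription of the axioms in $P$ with scalar coefficients attached, so the main (modest) obstacle is the setup of the grading on $\hat{\mathbb{K}}\otimes P$: one must declare $\hat{\mathbb{K}}$-scalars to be of trivial degree so that the bicharacter $\varepsilon(r\otimes x,s\otimes y)=\varepsilon(x,y)$ is unambiguous and so that $\hat{\mathbb{K}}\otimes P$ is well-defined as a $G$-graded vector space over $\hat{\mathbb{K}}$. Once this is settled, every axiom transfers from $P$ to $\hat{\mathbb{K}}\otimes P$ essentially without modification.
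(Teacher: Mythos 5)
Your verification is correct and is exactly the "straightforward computation" the paper invokes without writing out: every axiom on $\hat{\mathbb{K}}\otimes P$ reduces componentwise to the corresponding axiom in $P$ times a scalar from the (commutative) field $\hat{\mathbb{K}}$, with scalars carrying trivial degree so the bicharacter $\varepsilon(r\otimes x,s\otimes y)=\varepsilon(x,y)$ causes no sign issues. No gaps; your treatment of the Leibniz identity and the grading convention matches the intended argument.
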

\begin{proof}
 It is proved by a straightforward computation.
\end{proof}

We need the following Lemmas for the next theorem.
\begin{lemma}\label{bhca1}
In any commutative BiHom-associative color algebra, we have
\begin{enumerate}
 \item [1)] $\beta^2(a)(\beta(b)\alpha(c))=\beta^2(a)(\beta(b)\beta(c)),$
 \item [2)] $\beta^2(b)(\beta(c)\alpha(a))=\varepsilon(b+c, a)\beta^2(a)(\beta(b)\beta(c)),$ 
\item [3)]$\beta^2(c)(\beta(a)\alpha(b))=\varepsilon(c, a+b)\beta^2(a)(\beta(b)\beta(c)),$  
\end{enumerate}
\end{lemma}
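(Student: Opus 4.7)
The plan is to prove each of the three identities by repeatedly applying the three defining features of a commutative BiHom-associative color algebra, namely BiHom-associativity, the $\varepsilon$-commutativity $\mu(u,v)=\varepsilon(u,v)\mu(v,u)$ of the product, and the commutation $\alpha\circ\beta=\beta\circ\alpha$. The strategy is, in each case, to move factors within the triple product by successive $\varepsilon$-commutativity swaps, to reassociate via BiHom-associativity in order to transfer twists between $\alpha$ and $\beta$, and to regroup the result into the canonical form $\beta^2(a)(\beta(b)\beta(c))$, tracking the bicharacter coefficients picked up along the way.

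For identity (1), where only the decoration of the third factor differs between the two sides, the plan is to swap $\beta(b)$ past $\alpha(c)$ (incurring a factor $\varepsilon(b,c)$), apply BiHom-associativity together with the commutation $\alpha\circ\beta=\beta\circ\alpha$ to shift a $\beta$ onto the third factor in place of $\alpha$, and then perform the inverse swap, which produces the reciprocal factor $\varepsilon(c,b)$. By the skew-symmetry rule $\varepsilon(b,c)\varepsilon(c,b)=1$, the two factors cancel and we are left with $\beta^2(a)(\beta(b)\beta(c))$, establishing (1).

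For identities (2) and (3), where the outer $\beta^2$-slot is occupied by $b$ or $c$ instead of $a$, the plan is to first cycle the three letters into the order $(a,b,c)$ by two successive applications of $\varepsilon$-commutativity (once to the outer product, once to the inner product), using the bicharacter rules $\varepsilon(g,g'+g'')=\varepsilon(g,g')\varepsilon(g,g'')$ and $\varepsilon(g+g',g'')=\varepsilon(g,g'')\varepsilon(g',g'')$ to combine the accumulated coefficients. After this cyclic reordering we arrive at an expression of the form $\beta^2(a)(\beta(b)\alpha(c))$ times a bicharacter factor, at which point identity (1) absorbs the residual $\alpha$ on the third factor. A short computation then confirms that the accumulated factor equals $\varepsilon(b+c,a)$ for (2) and $\varepsilon(c,a+b)$ for (3).

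The main obstacle I anticipate is the careful bookkeeping of bicharacter factors through the successive transpositions and reassociations, and in particular verifying that the reassociations coming from BiHom-associativity leave the $\beta^2(a)$ prefactor intact rather than mutating it into $\alpha\beta(a)$ or $\alpha^2(a)$; this is where the commutation $\alpha\circ\beta=\beta\circ\alpha$ and, in the regular case, the invertibility of $\alpha$ and $\beta$ play the decisive role.
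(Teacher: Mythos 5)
Your general toolkit (alternating $\varepsilon$-commutativity with BiHom-associativity while tracking bicharacter factors) is the same as the paper's, but the specific reductions you describe do not go through. The decisive point is that $\varepsilon$-commutativity only permutes factors: it never changes which of the maps $\alpha$, $\beta$, $\beta^2$ is attached to which letter, and it never reassociates. Hence for 2) and 3), after your two commutativity swaps applied to $\beta^2(b)(\beta(c)\alpha(a))$ you obtain, up to a bicharacter factor, an expression such as $(\alpha(a)\beta(c))\beta^2(b)$, in which $a$ still carries $\alpha$ and $b$ still carries $\beta^2$; you never reach anything ``of the form $\beta^2(a)(\beta(b)\alpha(c))$'', so identity 1) cannot be invoked. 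Moving the twists from one letter to another is precisely what forces repeated use of BiHom-associativity, and the paper accordingly proves 2) and 3) by their own long chains (eight to ten equalities each, alternating outer commutativity, associativity, inner commutativity, associativity, and so on), not by reducing them to 1).

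The sketch for 1) has the same defect in miniature: after the inner swap you face $\varepsilon(b,c)\,\beta^2(a)(\alpha(c)\beta(b))$, and a single application of BiHom-associativity (which in addition needs $\beta^2(a)$ to lie in the image of $\alpha$, i.e. a regularity hypothesis the statement does not grant) yields $(\alpha^{-1}\beta^2(a)\,\alpha(c))\,\alpha\beta(b)$, where $c$ still carries $\alpha$; undoing the swap merely cancels the factor $\varepsilon(b,c)$ and returns you essentially to the starting point. One associativity step can only trade a twist between the two outer slots, never alter the map on the middle factor, so the announced cancellation $\varepsilon(b,c)\varepsilon(c,b)=1$ establishes nothing. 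Note also that the lemma is stated for arbitrary commutative BiHom-associative color algebras and the paper's computation uses no inverses, so an argument leaning on invertibility of $\alpha$ or $\beta$, as your closing remark anticipates, would only prove a weaker statement. As it stands the proposal is a plan rather than a proof, and its key step (cyclic reordering by commutativity alone, then absorbing the residual $\alpha$ via 1)) is not valid; you would need to reproduce the paper's longer alternations of commutativity and associativity for each of the three identities separately.
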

\begin{proof}
\begin{enumerate}
 \item [1)] For any $a, b, c\in \mathcal{H}(A)$, we have :
\begin{eqnarray}
 \beta^2(a)(\beta(b)\alpha(c))&=&\varepsilon(a, b+c)(\beta(b)\alpha(c))\beta^2(a)=\varepsilon(a, b+c)\alpha\beta(b)(\alpha(c)\beta(a))\nonumber\\
&=&\varepsilon(a, b+c)\varepsilon(c, a)\alpha\beta(b)(\beta(a)\alpha(c))=\varepsilon(a, b)(\beta(b)\beta(a))\beta\alpha(c)\nonumber\\
&=&\varepsilon(a, b)\varepsilon(a+b, c)\alpha\beta(c)(\beta(b)\beta(a))=\varepsilon(a, b)\varepsilon(a+b, c)(\beta(c)\beta(b))\beta^2(a)\nonumber\\
&=&\beta^2(a)(\beta(b)\beta(c))\nonumber.
\end{eqnarray}
\item [2)] For any $a, b, c\in \mathcal{H}(A)$, we have :
\begin{eqnarray}
 \beta^2(b)(\beta(c)\alpha(a))
&=&\varepsilon(b, a+c)(\beta(c)\alpha(a))\beta^2(b)
=\varepsilon(b, a+c)\alpha\beta(c)(\alpha(a)\beta(b))\nonumber\\
&=&\varepsilon(b, a+c)\varepsilon(a, b)\alpha\beta(c)(\beta(b)\alpha(a))
=\varepsilon(b, c)(\beta(c)\beta(b))\beta\alpha(a)\nonumber\\
&=&\varepsilon(b, c)\varepsilon(b+c, a)\alpha\beta(a)(\beta(c)\beta(b))
=\varepsilon(b, c)\varepsilon(b+c, a)(\beta(a)\beta(c))\beta^2(b)\nonumber\\
&=&\varepsilon(b, a+c)(\beta(c)\beta(a))\beta^2(b)
=\varepsilon(b, a+c)\alpha\beta(c)(\beta(a)\beta(b))\nonumber\\
&=&\varepsilon(b, c)\alpha\beta(c)(\beta(b)\beta(a))
=\varepsilon(b, c)(\beta(c)\beta(b))\beta^2(a)\nonumber\\
&=&\varepsilon(b, c)\varepsilon(b+c, a)\varepsilon(c, b)\beta^2(a)(\beta(b)\beta(c))\nonumber\\
&=&\varepsilon(b+c, a)\beta^2(a)(\beta(b)\beta(c))\nonumber.
\end{eqnarray}
 \item [3)] For any $a, b, c\in \mathcal{H}(A)$, we have :
\begin{eqnarray}
 \beta^2(c)(\beta(a)\alpha(b))&=&\varepsilon(c, a+b)(\beta(a)\alpha(b))\beta^2(c)=\varepsilon(c, a+b)\alpha\beta(a)(\alpha(b)\beta(c))\nonumber\\
&=&\varepsilon(c, a+b)\varepsilon(b, c)\alpha\beta(a)(\beta(c)\alpha(b))=\varepsilon(c, a)(\beta(a)\beta(c))\beta\alpha(b)\nonumber\\
&=&\varepsilon(a+c, b)\varepsilon(c, a)\alpha\beta(b)(\beta(a)\beta(c))=\varepsilon(a+c, b)\alpha\beta(b)(\beta(c)\beta(a)) \nonumber\\
&=&\varepsilon(a+c, b)(\beta(b)\beta(c))\beta^2(a)=\varepsilon(a+c, b)\varepsilon(b+c, a)\beta^2(a)(\beta(b)\beta(c))\nonumber\\
&=&\varepsilon(c, a+b)\beta^2(a)(\beta(b)\beta(c))\nonumber.
\end{eqnarray}
\end{enumerate}
This ends the proof.
\end{proof}

\begin{lemma}\label{bhca2}
If $(A, \cdot, \varepsilon, \alpha, \beta)$ is commutative BiHom-associative color algebra, we have
\begin{eqnarray}
 \alpha\beta(a)(bc)= \varepsilon(a, b)\beta(b)(\alpha(a)c).
\end{eqnarray}
\end{lemma}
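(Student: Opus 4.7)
My plan is to mimic the alternating associativity-commutativity bookkeeping used in Lemma \ref{bhca1}: rewrite both $\alpha\beta(a)(bc)$ and $\beta(b)(\alpha(a)c)$ as scalar multiples of the common expression $\alpha(b)(\alpha(a)c)$, and check that the bicharacter coefficients agree. The only tools needed are the BiHom-associativity identity $\alpha(x)(yz)=(xy)\beta(z)$, the $\varepsilon$-commutativity $xy=\varepsilon(x,y)yx$, the commutation $\alpha\beta=\beta\alpha$, and the standard bicharacter simplifications $\varepsilon(u,v)\varepsilon(v,u)=1$ and $\varepsilon(u+v,w)=\varepsilon(u,w)\varepsilon(v,w)$.

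First, I would reduce the left-hand side. Using $\alpha\beta=\beta\alpha$, outer $\varepsilon$-commutativity gives $\alpha\beta(a)(bc) = \varepsilon(a,b+c)(bc)\beta(\alpha(a))$. A single application of BiHom-associativity converts $(bc)\beta(\alpha(a))$ into $\alpha(b)(c\alpha(a))$, and inner $\varepsilon$-commutativity swaps $c$ and $\alpha(a)$. The prefactor $\varepsilon(a,b+c)\varepsilon(c,a)$ collapses to $\varepsilon(a,b)$ via $\varepsilon(a,c)\varepsilon(c,a)=1$, yielding $\alpha\beta(a)(bc) = \varepsilon(a,b)\alpha(b)(\alpha(a)c)$.

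Second, I would show separately that $\beta(b)(\alpha(a)c) = \alpha(b)(\alpha(a)c)$. Outer $\varepsilon$-commutativity plus BiHom-associativity give $\beta(b)(\alpha(a)c) = \varepsilon(b,a+c)\alpha^2(a)(cb)$, and swapping $c$ and $b$ inside produces $\varepsilon(b,a+c)\varepsilon(c,b)\alpha^2(a)(bc)$. A parallel associativity--commutativity--associativity chain rewrites $\alpha^2(a)(bc) = (\alpha(a)b)\beta(c) = \varepsilon(a,b)(b\alpha(a))\beta(c) = \varepsilon(a,b)\alpha(b)(\alpha(a)c)$. The overall scalar $\varepsilon(b,a+c)\varepsilon(c,b)\varepsilon(a,b)$ simplifies to $1$ by the skew-symmetry relations, proving the auxiliary identity. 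Combining the two reductions yields the lemma.

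The main obstacle is that the natural one-shot computation $\alpha\beta(a)(bc) = (\beta(a)b)\beta(c) = \varepsilon(a,b)(b\beta(a))\beta(c) = \varepsilon(a,b)\alpha(b)(\beta(a)c)$ produces $\alpha(b)(\beta(a)c)$, not $\beta(b)(\alpha(a)c)$. The real content of the lemma is therefore that, in the commutative BiHom-associative setting, $\alpha$ and $\beta$ may be permuted across the two inner slots without any sign; detecting this requires the auxiliary identity $\beta(b)(\alpha(a)c) = \alpha(b)(\alpha(a)c)$ above rather than a single reassociation, and is where commutativity really pulls its weight.
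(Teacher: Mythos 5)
Your proposal is correct and follows essentially the same route as the paper: the paper's own computation also first reduces $\alpha\beta(a)(bc)$ to $\varepsilon(a,b)\alpha(b)(\alpha(a)c)$ via commutativity and BiHom-associativity, and its remaining steps amount exactly to your auxiliary identity $\alpha(b)(\alpha(a)c)=\beta(b)(\alpha(a)c)$, just written as one continuous chain instead of two separate reductions. All your bicharacter bookkeeping checks out, so this is a faithful (and slightly better organized) version of the paper's argument.
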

\begin{proof}
For any $a, b, c\in \mathcal{H}(A)$, we have :
\begin{eqnarray}
 \alpha\beta(a)(bc)&=&\varepsilon(a, b+c)(bc)\beta\alpha(a)=\varepsilon(a, b+c)\alpha(b)(c\alpha(a))
=\varepsilon(a, b+c)\varepsilon(c, a)\alpha(b)(\alpha(a)c)\nonumber\\
&=&\varepsilon(a, b)(b\alpha(a))\beta(c)=(\alpha(a)b)\beta(c)=\alpha^2(a)(bc)=\varepsilon(b, c)\alpha^2(a)(cb)\nonumber\\
&=&\varepsilon(b, c)(\alpha(a)c)\beta(b)=\varepsilon(b, c)\varepsilon(a+c, b)\beta(b)(\alpha(a)c)\nonumber\\
&=&\varepsilon(a, b)\beta(b)(\alpha(a)c)\nonumber.
\end{eqnarray}
This finishes the proof.
\end{proof}

The below result gives an extension of a  BiHom-Poisson color algebra by a commutative BiHom-associative color algebra.
\begin{theorem}\label{tens}
 Let $(A, \cdot, \varepsilon, \alpha, \beta)$  be a commutative BiHom-associative
color algebra and $(P, \ast, \{-, -\}, \varepsilon, \alpha_L)$ be a  BiHom-Poisson color algebra. 
Then the tensor product $A\otimes P$ endowed with the even (bi)linear maps
\begin{eqnarray} 
\alpha &:&=\alpha_A\otimes \alpha_P : A\otimes P\rightarrow A\otimes P,\quad a\otimes x\mapsto\alpha_A(a)\otimes\alpha_P(x), \nonumber\\
\beta &:&=\beta_A\otimes \beta_P : A\otimes P\rightarrow A\otimes P,\quad a\otimes x\mapsto\beta_A(a)\otimes\beta_P(x) ,\nonumber\\
\ast &:& (A\otimes P)\times (A\otimes P)\rightarrow A\otimes P, 
\quad (a\otimes x, b\otimes y)\mapsto\varepsilon(x, b)a\cdot b\otimes x\ast y,\nonumber\\
\{-, -\} &:& (A\otimes P)\times (A\otimes P)\rightarrow A\otimes P, \quad (a\otimes x, b\otimes y)\mapsto\varepsilon(x, b)(a\cdot b)\otimes [x, y],\nonumber
\end{eqnarray}
is a  BiHom-Poisson color algebra.
\end{theorem}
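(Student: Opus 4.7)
The plan is to verify, in order, the three defining axioms of a BiHom-Poisson color algebra on $A\otimes P$: BiHom-associativity of $\ast$, the BiHom-Lie axioms (skew-symmetry and Jacobi) for $\{-,-\}$, and the BiHom-Leibniz compatibility. The commutation $\alpha\circ\beta=\beta\circ\alpha$ on $A\otimes P$ is immediate from the componentwise definition together with the corresponding identities in $A$ and $P$.

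For BiHom-associativity of $\ast$, I would expand both sides of $\mu(\alpha(a\otimes x),\mu(b\otimes y,c\otimes z))=\mu(\mu(a\otimes x,b\otimes y),\alpha(c\otimes z))$ directly. The bicharacter factors produced by the two $\ast$-operations reconcile by multiplicativity of $\varepsilon$, and what remains on the $A$-slot is BiHom-associativity of $\cdot$ in $A$ and on the $P$-slot BiHom-associativity of $\ast$ in $P$. Commutativity of $A$ is not required for this step. The $\varepsilon$-skew-symmetry of $\{-,-\}$ is then a short rearrangement using $\varepsilon$-skew-symmetry of the bracket in $P$ and $\varepsilon$-commutativity of $\cdot$ in $A$.

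The BiHom-Jacobi identity is where Lemma \ref{bhca1} enters. After expanding the cyclic sum, each of the three summands carries an $A$-factor of the shape $\beta_A^2(\cdot)\cdot(\beta_A(\cdot)\cdot\alpha_A(\cdot))$ in one of the cyclic permutations of $(a,b,c)$. Parts (1), (2), (3) of Lemma \ref{bhca1} express these three $A$-factors in terms of the single expression $\beta_A^2(a)\cdot(\beta_A(b)\cdot\beta_A(c))$ with bicharacter coefficients $1$, $\varepsilon(b+c,a)$, $\varepsilon(c,a+b)$ respectively. These coefficients are exactly the cyclic signs appearing in the BiHom-Jacobi sum of $P$, so the full sum collapses to a common $A$-factor tensor $\mathbf{J}_P(x,y,z)=0$.

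The main obstacle is the BiHom-Leibniz color identity, and here Lemma \ref{bhca2} plays the key role. The $A$-factor of the left-hand side is $\alpha_A\beta_A(a)\cdot(b\cdot c)$, while the two $A$-factors on the right are $(\beta_A(a)\cdot b)\cdot\beta_A(c)$ and $\beta_A(b)\cdot(\alpha_A(a)\cdot c)$. Commutativity and BiHom-associativity of $A$ show that the first two of these coincide, and Lemma \ref{bhca2} (applied to the appropriate triple) reconciles the third with the first, introducing precisely the $\varepsilon(x,y)$-type factor that the BiHom-Leibniz identity in $P$ requires. Combined with the BiHom-Leibniz identity of $P$ applied to the $P$-factors $(x,y,z)$, this gives the desired equality. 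To avoid sign errors, I would carry out this step last and, before invoking the Leibniz identity of $P$, group separately the bicharacter factors produced by the definitions of $\ast$ and $\{-,-\}$ on $A\otimes P$, those produced by Lemma \ref{bhca2}, and those coming from $\varepsilon$-commutativity of $\cdot$.
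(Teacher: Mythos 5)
Your proposal is correct and follows essentially the same route as the paper's proof: BiHom-associativity by direct expansion, the BiHom-Jacobi identity via parts (1)--(3) of Lemma \ref{bhca1} collapsing the cyclic sum onto a common $A$-factor tensored with the Jacobiator of $P$, and the BiHom-Leibniz identity via BiHom-associativity/commutativity of $A$ together with Lemma \ref{bhca2} and the Leibniz identity of $P$. In fact you make explicit two points the paper leaves implicit (the $\varepsilon$-skew-symmetry check and the matching of the bicharacter coefficients with the cyclic signs in the Jacobi sum), but the argument is the same.
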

\begin{proof}
To simplify the typography, we make no distinction among $\alpha, \alpha_A, \alpha_P$ or $\beta, \beta_A, \beta_P$.
It is easy to prove the color BiHom-associativity.
For the BiHom-Jacobi color identity, we have, for any $a, b, c\in \mathcal{H}(A)$ and $x, y, z\in \mathcal{H}(P)$,
\begin{eqnarray}
 [\beta^2(a\otimes x), [\beta(b\otimes y), \alpha(c\otimes z)]]
&=&\varepsilon(y, c)[\beta^2(a)\otimes\beta^2(x), \beta(b)\alpha(c)\otimes[\beta(y), \alpha(z)]]\nonumber\\
&=&\varepsilon(y, c)\varepsilon(x, b+c)\beta^2(a)(\beta(b)\alpha(c))\otimes[\beta^2(x), [\beta(y), \alpha(z)]]\nonumber.
\end{eqnarray}
By Lemma \ref{bhca1},
\begin{eqnarray}
\oint\varepsilon(c+z, a+x) [\beta^2(a\otimes x), [\beta(b\otimes y), \alpha(c\otimes z)]]=0\nonumber.
\end{eqnarray}
 Now,
 for any $a, b, c\in \mathcal{H}(A)$ and $x, y, z\in \mathcal{H}(P)$,
\begin{eqnarray}
 \{\beta\alpha(a\otimes x), (b\otimes y)\ast(c\otimes z)\}
&=&\varepsilon(y, c)\{\alpha\beta(a)\otimes\beta\alpha(x), bc\otimes yz\}\nonumber\\
&=&\varepsilon(y, c)\varepsilon(x, b+c)\alpha\beta(a)(bc)\otimes[\beta\alpha(x), yz]\nonumber\\
&=&\varepsilon(y, c)\varepsilon(x, b+c)\alpha\beta(a)(bc)\otimes([\beta(x), y]\beta(z)+\varepsilon(x, y)\beta(y)[\alpha(x), z])\nonumber\\
&=&\varepsilon(y, c)\varepsilon(x, b) \varepsilon(x, c)\alpha\beta(a)(bc)\otimes[\beta(x), y]\beta(z)+\nonumber\\
&&+\varepsilon(y, c)\varepsilon(x, b+c)\varepsilon(x, y)\alpha\beta(a)(bc)\otimes\beta(y)[\alpha(x), z]\nonumber.
\end{eqnarray}

By color BiHom-associativity and Lemma \ref{bhca2},
\begin{eqnarray}
 \{\beta\alpha(a\otimes x), (b\otimes y)\ast(c\otimes z)\}
&=&\varepsilon(x, b)(\beta(a)b\otimes[\beta(x), y])(\beta(c)\otimes\beta(z))\nonumber\\
&&+\varepsilon(y, c)\varepsilon(x, b+c)\varepsilon(x, y)\varepsilon(a, b)\beta(b)(\alpha(a)c)\otimes\beta(y)[\alpha(x), z]\nonumber\\
&=&\varepsilon(x, b)(\beta(a)b\otimes[\beta(x), y])(\beta(c)\otimes\beta(z))\nonumber\\
&&+\varepsilon(x, c)\varepsilon(x, b)\varepsilon(x, y)\varepsilon(a, b)\varepsilon(a, y)(\beta(b)\otimes\beta(y))(\alpha(a)c\otimes[\alpha(x), z])\nonumber\\
&=&\{\beta(a)\otimes\beta(x), b\otimes y\}\beta(c)\otimes\beta(z)\nonumber\\
&&+\varepsilon(a+x, b+y)(\beta(b)\otimes\beta(y)) \{\alpha(a)\otimes\alpha(x), c\otimes z\}\nonumber\\
&=&\{\beta(a\otimes x), b\otimes y\}\beta(c\otimes z)
+\varepsilon(a+x, b+y)\beta(b\otimes y) \{\alpha(a\otimes x), c\otimes z\}.\nonumber
\end{eqnarray}
This completes the proof.
\end{proof}
From Theorem \ref{tens}, we obtain Theorem 2.17 \cite{BIA}.
\begin{corollary}
  Let $(A, \cdot, \varepsilon)$  be a commutative Hom-associative
color algebra and $(P, \ast, \{-, -\}, \varepsilon, \alpha, \beta)$ be a  BiHom-Poisson color algebra. 
Then the tensor product $A\otimes P$ is a  BiHom-Poisson color algebra.
\end{corollary}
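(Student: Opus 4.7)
The plan is to obtain this statement as a direct specialization of Theorem \ref{tens} by trivializing the twisting maps on the $A$-factor.

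First, I would observe that any commutative (Hom-)associative color algebra $(A,\cdot,\varepsilon)$ in which no twist is specified can be regarded as a commutative BiHom-associative color algebra $(A,\cdot,\varepsilon,\alpha_A,\beta_A)$ by taking $\alpha_A=\beta_A=\mathrm{id}_A$. Indeed, $\mathrm{id}_A$ is an even linear endomorphism, the compatibility $\alpha_A\circ\beta_A=\beta_A\circ\alpha_A$ is automatic, the BiHom-associativity identity $\alpha_A(a)\cdot(b\cdot c)=(a\cdot b)\cdot\alpha_A(c)$ collapses to ordinary associativity, and the $\varepsilon$-commutativity condition is unaffected by this choice.

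Next, I would simply invoke Theorem \ref{tens} with this $A$ together with the given BiHom-Poisson color algebra $(P,\ast,\{-,-\},\varepsilon,\alpha,\beta)$. The theorem then produces on $A\otimes P$ a BiHom-Poisson color algebra structure whose structure maps are
\[
\alpha_{A\otimes P}=\mathrm{id}_A\otimes\alpha,\qquad \beta_{A\otimes P}=\mathrm{id}_A\otimes\beta,
\]
with associative product $(a\otimes x)\cdot'(b\otimes y)=\varepsilon(x,b)\,(a\cdot b)\otimes(x\ast y)$ and bracket $[a\otimes x,b\otimes y]=\varepsilon(x,b)\,(a\cdot b)\otimes\{x,y\}$. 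Since Theorem \ref{tens} has already established the BiHom-associativity, the $\varepsilon$-skew-symmetry, the BiHom-Jacobi identity and the BiHom-Leibniz color identity in full generality, no further verification is required for the specialization.

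The main (and indeed only) point one has to notice is that Lemmas \ref{bhca1} and \ref{bhca2}, which were the technical ingredients used inside the proof of Theorem \ref{tens}, degenerate to trivial identities when $\alpha_A=\beta_A=\mathrm{id}_A$, so the general argument goes through verbatim. Consequently, the corollary follows immediately by substitution, and no extra computation is needed beyond citing Theorem \ref{tens}.
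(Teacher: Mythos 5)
Your proposal is correct and is essentially the paper's own (implicit) argument: the corollary is obtained by viewing $(A,\cdot,\varepsilon)$ as a commutative BiHom-associative color algebra with $\alpha_A=\beta_A=\mathrm{id}_A$ and citing Theorem \ref{tens} as a black box. The only quibble is your aside that Lemmas \ref{bhca1} and \ref{bhca2} become ``trivial'' in this case --- with identity twists they are still genuine (if immediate) consequences of $\varepsilon$-commutativity and associativity --- but this does not affect the validity of the specialization.
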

\begin{corollary}
  Let $(A, \cdot, \varepsilon)$  be a commutative associative
color algebra and $(P, \ast, \{-, -\}, \varepsilon, \alpha, \beta)$ be a  BiHom-Poisson color algebra. 
Then the tensor product $A\otimes P$ is a  BiHom-Poisson color algebra.
\end{corollary}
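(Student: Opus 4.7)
The plan is to deduce this corollary directly from Theorem \ref{tens} by recognizing that a commutative associative color algebra is a special case of a commutative BiHom-associative color algebra. Specifically, given $(A,\cdot,\varepsilon)$ commutative associative, I would endow it with the trivial twisting maps $\alpha_A=\beta_A=\mathrm{id}_A$ and verify quickly that $(A,\cdot,\varepsilon,\mathrm{id}_A,\mathrm{id}_A)$ satisfies the axioms of a commutative BiHom-associative color algebra: the commutation $\alpha_A\beta_A=\beta_A\alpha_A$ is trivial, the BiHom-associativity $\alpha_A(a)(bc)=(ab)\alpha_A(c)$ collapses to the ordinary associativity $a(bc)=(ab)c$, and the $\varepsilon$-commutativity $ab=\varepsilon(a,b)ba$ is exactly the hypothesis on $A$.

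With this observation in hand, I would apply Theorem \ref{tens} to the pair $(A,\cdot,\varepsilon,\mathrm{id}_A,\mathrm{id}_A)$ and $(P,\ast,\{-,-\},\varepsilon,\alpha,\beta)$. The theorem produces, on $A\otimes P$, the structure maps $\alpha_A\otimes\alpha=\mathrm{id}_A\otimes\alpha$ and $\beta_A\otimes\beta=\mathrm{id}_A\otimes\beta$, together with the product $(a\otimes x)\ast(b\otimes y)=\varepsilon(x,b)\,ab\otimes x\ast y$ and the bracket $\{a\otimes x,b\otimes y\}=\varepsilon(x,b)\,ab\otimes[x,y]$, and asserts that the resulting quintuple is a BiHom-Poisson color algebra, which is exactly the conclusion we want.

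There is essentially no obstacle: the content of the statement is entirely contained in Theorem \ref{tens}, and the only thing to check is the (routine) observation that an associative color algebra becomes a BiHom-associative one with identity twistings. No further computation involving Lemmas \ref{bhca1} and \ref{bhca2} is needed, since those were absorbed into the proof of Theorem \ref{tens}. One could alternatively phrase this as a corollary of the preceding Hom-associative corollary by setting the Hom-twist on $A$ to the identity, but invoking Theorem \ref{tens} directly is the cleanest route.
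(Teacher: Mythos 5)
Your proposal is correct and is exactly the intended argument: the paper states this corollary as an immediate specialization of Theorem \ref{tens}, obtained by viewing the commutative associative color algebra $(A,\cdot,\varepsilon)$ as a commutative BiHom-associative color algebra with $\alpha_A=\beta_A=\mathrm{id}_A$. Nothing further is needed.
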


As the tensor product of two BiHom-Poisson color algebra fails to be a BiHom-Poisson color algebra, we have however the following theorem :
\begin{proposition}\label{tensh}
Let $(P_1, \ast_1,  [-, -]_1, \varepsilon, \alpha_1)$ and $(P_2, \ast_2,  [-, -]_2, \varepsilon, \alpha_2)$ be two
Hom-Poisson color algebras such that the maps 
\begin{eqnarray}
 \alpha(a\otimes x)& :=&\alpha_1(a)\otimes\alpha_2(x),\\
(a\otimes x)\ast(b\otimes y)& :=&\varepsilon(x, b)a\ast_1b\otimes x\ast_2y,\\
{[a\otimes x, b\otimes y]}& :=&\varepsilon(x, b)\Big(a\ast_1b\otimes[x, y]_2+[a, b]\otimes x\ast_2 y\Big),
\end{eqnarray}
satisfy the Hom-Jacobi identity on $P_1\otimes P_2$. Then, $(P_1\otimes P_2, \ast, [-, -], \alpha)$ is a  Hom-Poisson color algebra.
\end{proposition}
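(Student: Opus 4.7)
The plan is a direct verification of the Hom-Poisson color algebra axioms, since the Hom-Jacobi identity for $[-,-]$ is supplied as hypothesis. Four items remain: that $\alpha = \alpha_1 \otimes \alpha_2$ is an even morphism for both $\ast$ and $[-,-]$, Hom-associativity of $\ast$, $\varepsilon$-skew-symmetry of $[-,-]$, and the Hom-Leibniz color identity. The $\varepsilon$-skew-symmetry step will implicitly invoke $\varepsilon$-commutativity of $\ast_1$ and $\ast_2$, which is the standard convention in the Poisson context.

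The morphism property is immediate: each $\alpha_i$ preserves degrees and intertwines $\ast_i$ and $[-,-]_i$, so the scalar $\varepsilon(x,b)$ is untouched. Hom-associativity of $\ast$ follows by expanding both associations, collecting the bicharacter factors via $\varepsilon(x, b+c) = \varepsilon(x,b)\varepsilon(x,c)$, and invoking Hom-associativity of $\ast_1$ and $\ast_2$ in the two tensor slots. For $\varepsilon$-skew-symmetry, I rewrite $[b\otimes y,\, a\otimes x]$ using $\varepsilon$-skew-symmetry of each $[-,-]_i$ and $\varepsilon$-commutativity of each $\ast_i$; the residual scalar is
\[
\varepsilon(a+x,\, b+y)\,\varepsilon(y,a)\,\varepsilon(y,x)\,\varepsilon(b,a),
\]
which collapses to $\varepsilon(x,b)$ on cancelling the pairs $(a,b)$, $(a,y)$, and $(x,y)$ via $\varepsilon(g,g')\varepsilon(g',g) = 1$.

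The main obstacle is the Hom-Leibniz color identity. Expanding
\[
\{\alpha(a\otimes x),\,(b\otimes y)\ast(c\otimes z)\}
\]
through the definitions produces a term containing $[\alpha_2(x),\, y\ast_2 z]_2$ in one tensor slot and a term containing $[\alpha_1(a),\, b\ast_1 c]_1$ in the other; applying the Hom-Leibniz identity separately in $P_1$ and in $P_2$ yields four summands. A parallel expansion of
\[
\{a\otimes x, b\otimes y\}\ast\alpha(c\otimes z) + \varepsilon(a+x,\, b+y)\,\alpha(b\otimes y)\ast\{a\otimes x, c\otimes z\}
\]
also yields four summands, and the task is to match them pairwise. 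This requires Hom-associativity of each $\ast_i$ to reassociate the triple products and $\varepsilon$-commutativity to transport factors across tensor slots so that the same pure-tensor shape appears on both sides. The real work lies in the bicharacter bookkeeping, handled by repeated use of bilinearity of $\varepsilon$ together with $\varepsilon(g,g')\varepsilon(g',g) = 1$; once the scalar coefficients are shown to agree, matching the tensor factors is a direct consequence of the Hom-Leibniz identities already holding in $P_1$ and $P_2$.
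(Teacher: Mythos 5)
Your proposal is correct and takes essentially the same route as the paper, whose proof of this proposition is just the remark that it is a long, straightforward verification ``using the same technics as in Theorem~\ref{tens}'': expand each axiom slotwise, use Hom-associativity of $\ast_1,\ast_2$ and the Hom-Leibniz identities of $P_1,P_2$, and match terms by bicharacter bookkeeping, exactly as you outline. Your explicit observation that $\varepsilon$-commutativity of $\ast_1$ and $\ast_2$ is tacitly needed (both for the skew-symmetry of the tensor bracket and, in the Leibniz check, to rewrite $\alpha_1(a)\ast_1(b\ast_1 c)$ as a multiple of $\alpha_1(b)\ast_1(a\ast_1 c)$ and similarly in $P_2$) makes explicit a hypothesis the paper leaves implicit.
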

\begin{proof}
 The proof is long and straightforward but uses the same technics as in Theorem \ref{tens}.
\end{proof}
\begin{corollary}(\cite{BH} Proposition 2.5.2)
The tensor product of two Poisson color algebras is again a Poisson color algebra.
\end{corollary}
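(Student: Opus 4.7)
My plan is to deduce the corollary from Proposition \ref{tensh} by specializing the two Hom-Poisson color algebras to classical Poisson color algebras, i.e.\ by taking $\alpha_1=\alpha_2=\mathrm{id}$. Under this choice the induced twisting map $\alpha=\alpha_1\otimes\alpha_2$ on $P_1\otimes P_2$ is the identity, so a Hom-Poisson structure on $P_1\otimes P_2$ (with $\alpha=\mathrm{id}$) is the same data as an ordinary Poisson color algebra structure. Thus, once the hypotheses of Proposition \ref{tensh} are checked, the conclusion is exactly what the corollary claims. The only non-automatic hypothesis is the Jacobi identity for the bracket
\[
[a\otimes x,\, b\otimes y]=\varepsilon(x,b)\bigl(a\ast_1 b\otimes[x,y]_2+[a,b]_1\otimes x\ast_2 y\bigr),
\]
so the content of the proof is to verify this identity directly on $P_1\otimes P_2$ using the Poisson axioms on $P_1$ and $P_2$.

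First I would fix homogeneous elements $a\otimes x$, $b\otimes y$, $c\otimes z$ and expand $[a\otimes x,[b\otimes y,c\otimes z]]$ twice via the above formula; this produces four terms, each of the form $u_1\otimes v_1$ where exactly one of $u_1,v_1$ contains an iterated bracket and the other is an iterated $\ast$-product. Next I would repeat this expansion on the two cyclic permutations of $(a\otimes x,b\otimes y,c\otimes z)$ and assemble the $\varepsilon$-weighted cyclic sum that defines the Jacobi expression $\oint \varepsilon(z+c,x+a)[a\otimes x,[b\otimes y,c\otimes z]]$, tracking all the $\varepsilon$-factors created both by the bracket formula and by reordering of tensor components.

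Then I would partition the twelve resulting terms into three groups according to which tensor slot carries the nested bracket. The terms in which both nested brackets live in $P_1$ organize into $\oint[a,[b,c]]_1 \otimes (\text{iterated }\ast_2\text{-product})$, which vanishes by Jacobi in $P_1$; symmetrically, the purely $P_2$-nested terms vanish by Jacobi in $P_2$. The remaining ``mixed'' terms, where one bracket sits in $P_1$ and the other in $P_2$, pair up into two blocks, each of which vanishes by the Leibniz identity of one factor combined with the $\varepsilon$-skew-symmetry of the bracket in the other; at this point one also uses the associativity and $\varepsilon$-commutativity of $\ast_1$ and $\ast_2$ to match the operand orderings. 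Once Jacobi holds, Proposition \ref{tensh} applies with $\alpha=\mathrm{id}$ and yields a Poisson color algebra structure on $P_1\otimes P_2$.

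The main obstacle will be bookkeeping rather than ideas: there are three cyclic terms, each producing four summands, and one must verify that every accumulated bicharacter factor $\varepsilon(\cdot,\cdot)$ agrees across the cyclic sum so that the Jacobi identities of $P_1, P_2$ and the two Leibniz identities actually line up with matching coefficients. This is precisely the sort of long but straightforward $\varepsilon$-sign calculation the author alludes to in the proof of Proposition \ref{tensh}, and it uses no tool beyond the Poisson axioms on each factor and the bicharacter relations $\varepsilon(g,g'+g'')=\varepsilon(g,g')\varepsilon(g,g'')$ and $\varepsilon(g,g')\varepsilon(g',g)=1$.
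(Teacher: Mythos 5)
Your proposal is correct and follows essentially the same route as the paper: the corollary is exactly Proposition \ref{tensh} specialized to $\alpha_1=\alpha_2=\mathrm{id}$, and the only thing to supply is the Jacobi identity for the tensor-product bracket, which you verify by the standard splitting into pure and mixed terms using the Jacobi, Leibniz, associativity and $\varepsilon$-commutativity axioms of the two factors (the paper delegates this check to the cited reference rather than writing it out). Note only that the $\varepsilon$-commutativity of $\ast_1$ and $\ast_2$ you invoke is genuinely needed (already for the $\varepsilon$-skew-symmetry of the bracket on $P_1\otimes P_2$), so the statement should be read, as in the cited source, for commutative Poisson color algebras.
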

\begin{corollary}(\cite{DYa} Theorem 2.9)
The tensor product of two Hom-Poisson algebras is also a Hom-Poisson algebra.
\end{corollary}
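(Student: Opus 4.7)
The plan is to deduce this as the trivial-grading specialization of Proposition \ref{tensh}. An ordinary Hom-Poisson algebra is precisely a Hom-Poisson color algebra whose grading group is taken to be $G=\{0\}$, so that every element is homogeneous of degree zero and the bicharacter $\varepsilon:G\times G\to\mathbb{K}^*$ is forced to be identically $1$. Under this specialization, all color signs in Proposition \ref{tensh} disappear, and the structure maps on $P_1\otimes P_2$ collapse to
\begin{align*}
\alpha(a\otimes x) &= \alpha_1(a)\otimes\alpha_2(x),\\
(a\otimes x)\ast(b\otimes y) &= a\ast_1 b\,\otimes\, x\ast_2 y,\\
[a\otimes x,\,b\otimes y] &= a\ast_1 b\,\otimes\,[x,y]_2 \;+\; [a,b]_1\,\otimes\, x\ast_2 y,
\end{align*}
which are exactly the standard formulas for the tensor product of two Hom-Poisson algebras in the sense of \cite{DYa}.

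Next I would verify the one hypothesis of Proposition \ref{tensh} that does not come for free from the color framework, namely that the bracket above satisfies the Hom-Jacobi identity on $P_1\otimes P_2$. With $\varepsilon\equiv 1$, a direct expansion of $[\alpha(\cdot),[\cdot,\cdot]]$ on simple tensors produces four families of terms, according to whether each inner and outer bracket is realized in its Lie or associative factor. Two of these families cancel in pairs by invoking the Hom-Jacobi identities of $(P_1,[-,-]_1,\alpha_1)$ and $(P_2,[-,-]_2,\alpha_2)$ separately; the two remaining mixed families cancel by applying the Hom-Leibniz identity in each factor together with Hom-associativity of $\ast_1$ and $\ast_2$. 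This is the standard computation already carried out by Yau.

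Once this Hom-Jacobi check is in place, Proposition \ref{tensh} applies verbatim to $(P_1,\ast_1,[-,-]_1,\varepsilon,\alpha_1)$ and $(P_2,\ast_2,[-,-]_2,\varepsilon,\alpha_2)$ viewed with trivial grading, producing a Hom-Poisson color algebra structure on $P_1\otimes P_2$ with trivial grading and $\varepsilon\equiv 1$, which is by definition an ordinary Hom-Poisson algebra. The main (and only) obstacle is the Hom-Jacobi verification; the Hom-associativity of $\ast$ and the Hom-Leibniz identity for $[-,-]$ are routine once the bicharacter signs have been removed, and are already subsumed by the proof sketched for Proposition \ref{tensh}.
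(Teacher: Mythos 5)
Your proposal follows essentially the same route as the paper, which states this corollary as an immediate specialization of Proposition \ref{tensh} to the trivial grading $G=\{0\}$ with $\varepsilon\equiv 1$ and offers no further argument. You are in fact slightly more careful than the paper, since you explicitly note that the Hom-Jacobi identity on $P_1\otimes P_2$ is a \emph{hypothesis} of Proposition \ref{tensh} and therefore must still be verified (as in Yau's original computation, using commutativity of the Hom-associative products) before the proposition can be invoked.
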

\subsection{BiHom-Poisson color algebras induced by Rota-Baxter operators}

\begin{definition}
Let $(P, \cdot, [-, -], \varepsilon, \alpha, \beta)$ be a  BiHom-Poisson color algebra.  An even linear map $R : P\rightarrow P$ is called
a Rota-Baxter operator of weight $\lambda\in\mathbb{K}$ on $P$ if
\begin{eqnarray}
\alpha\circ R&=&R\circ\alpha,\quad \beta\circ R=R\circ\beta,\\
 R(x)\cdot R(y)&=&R\Big(R(x)\cdot y+x\cdot R(y)+\lambda x\cdot y\Big),\\ 
 {[R(x), R(y)]}&=&R\Big([R(x), y]+[x, R(y)]+\lambda[x, y]\Big),
\end{eqnarray}
for all $x, y\in\mathcal{H}(A)$.
 \end{definition}

The below result connects  BiHom-Poisson color algebras to Rota-Baxter operator.
\begin{theorem}
Let $(P, \cdot,  [-, -], \varepsilon, \alpha, \beta)$ be a  BiHom-Poisson color algebra and $R : P\rightarrow P$ be  Rota-Baxter operator of weight
 $\lambda\in{\bf K}$ on $P$. Then $P$ is also a  BiHom-Poisson color algebra with
\begin{eqnarray}
x\ast y &=& R(x)\cdot y + x\cdot R(y) +\lambda x\cdot y,\\
\{x, y\} &=& [R(x), y] + [x, R(y)] +\lambda [x, y],
\end{eqnarray}
for all $x, y\in\mathcal{H}(P)$.\\
Moreover, $R$ is a morphism of  BiHom-Poisson color algebra of $(P, \ast,  \{x, y\}, \varepsilon, \alpha, \beta)$\\
 into $(P, \cdot,  [-, -], \varepsilon, \alpha, \beta)$.
\end{theorem}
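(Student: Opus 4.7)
The plan is to verify directly the three BiHom-Poisson axioms for the new operations $\ast$ and $\{-,-\}$, together with the morphism property. The crucial observation is that the two Rota-Baxter axioms can be rewritten as $R(x\ast y)=R(x)\cdot R(y)$ and $R(\{x,y\})=[R(x),R(y)]$. Combined with $R\alpha=\alpha R$ and $R\beta=\beta R$, this immediately gives the morphism statement, and it provides the key tool for collapsing the $R\circ R$ products that appear when one unfolds the new operations.

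For the BiHom-associativity of $\ast$, the approach is to expand $\alpha(x)\ast(y\ast z)$ by distributing $\ast$ over its three summands. The doubly-$R$-containing term $R(\alpha(x))\cdot R(y\ast z)$ collapses, via the Rota-Baxter identity, to $\alpha R(x)\cdot(R(y)\cdot R(z))$. Applying BiHom-associativity of $\cdot$ to each of the seven resulting terms of the form $u\cdot(v\cdot w)$ and recollecting yields $(x\ast y)\ast\alpha(z)$. The BiHom-Lie axioms for $\{-,-\}$ follow analogously: the BiHom $\varepsilon$-skew-symmetry uses only that $R$ preserves the grading, so $\varepsilon(R(u),v)=\varepsilon(u,v)$, together with the skew-symmetry of $[-,-]$, while the BiHom-Jacobi identity is obtained by a parallel expand-regroup argument in which the pure-$R$ brackets fold back to $R(\{-,-\})$ and the remaining terms rearrange into three cyclic copies of the original Jacobi identity for $[-,-]$.

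The main obstacle is the BiHom-Leibniz color identity linking $\ast$ and $\{-,-\}$. Expanding $\{\alpha\beta(x),y\ast z\}$ produces seven brackets of the form $[\alpha\beta(u),v\cdot w]$ with $R$ inserted in various slots and weights among $1,\lambda,\lambda^2$; applying the original BiHom-Leibniz identity of $P$ to each yields fourteen terms. On the right-hand side, unfolding both $\ast$ and $\{-,-\}$ in $\{\beta(x),y\}\ast\beta(z)+\varepsilon(x,y)\beta(y)\ast\{\alpha(x),z\}$ and then using $R(\{\beta(x),y\})=[\beta R(x),R(y)]$ and $R(\{\alpha(x),z\})=[\alpha R(x),R(z)]$ to collapse the doubly-$R$ summands also produces fourteen terms. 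These sort naturally by their $\lambda$-weight and by which slot carries an $R$, and they pair up bijectively across the two sides. The morphism property for $R$ then follows directly from the two Rota-Baxter rewritings together with $R\alpha=\alpha R$ and $R\beta=\beta R$.
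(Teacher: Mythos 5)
Your proposal is correct and follows essentially the same route as the paper: expand the new operations, collapse the double-$R$ terms via $R(x\ast y)=R(x)\cdot R(y)$ and $R(\{x,y\})=[R(x),R(y)]$, apply the original BiHom-Leibniz identity termwise to the seven brackets, and regroup the fourteen resulting terms, with the morphism claim read off directly from the Rota-Baxter axioms and $R\alpha=\alpha R$, $R\beta=\beta R$. The only difference is that you sketch the associativity, skew-symmetry and Jacobi verifications a bit more explicitly, where the paper simply declares them analogous.
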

\begin{proof}
 For any $x, y, z\in \mathcal{H}(P)$,
\begin{eqnarray}
 \{\beta\alpha(x), y\ast z\}
&=&\{\beta\alpha(x), R(y)z+yR(z)+\lambda yz\}\nonumber\\
&=&[R\beta(x), R(y)z+yR(z)+\lambda yz]+[\beta(x), R(R(y)z+yR(z)+\lambda yz)]\nonumber\\
&&+\lambda[\beta(x), R(y)z+yR(z)+\lambda yz]\nonumber\\
&=&[R\beta(x), R(y)z]+[R\beta(x), yR(z)]+\lambda[R\beta(x), yz]+[\beta(x), R(y)R(z)]\nonumber\\
&&+\lambda[\beta(x), R(y)z]+\lambda[\beta(x), yR(z)]+\lambda^2[\beta(x), yz]\nonumber.
\end{eqnarray}
By BiHom-Leibniz color identity,
\begin{eqnarray}
 \{\beta\alpha(x), y\ast z\}
&=&[R\beta(x), R(y)]\beta(z)+\varepsilon(x, y)\beta(R(y))[R\alpha(x), z]  +[R\beta(x), y]\beta(R(z))\nonumber\\
&&+\varepsilon(x, y)\beta(y)\cdot[R\alpha(x), R(z)]+\lambda[R\beta(x), y]\beta(z)+\lambda\varepsilon(x, y)\beta(y)\cdot[R\alpha(x), z]\nonumber\\
&&+[\beta(x), R(y)]\beta(R(z))+\varepsilon(x, y)\beta(R(y))[\alpha(x), R(z)]+\lambda[\beta(x), R(y)]\beta(z)\nonumber\\
&&+\varepsilon(x, y)\lambda\beta R(y)[\alpha(x), z]+\lambda[\beta(x), y]\beta(R(z))+\lambda\varepsilon(x, y)\beta(y)[\alpha(x), R(z)]\nonumber\\
&&+\lambda^2[\beta(x), y]\beta(z)+
\lambda^2\varepsilon(x, y)\beta(y)[\alpha(x), z]\nonumber.
\end{eqnarray}
By reorganizing the terms, we have
\begin{eqnarray}
 \{\beta\alpha(x), y\ast z\}
&=&[R\beta(x), R(y)]\beta\alpha(z)+[R\beta(x), y]\beta(R(z))+[\beta(x), R(y)]\beta(R(z))+\lambda[\beta(x), y]\beta(R(z))\nonumber\\
&&+\lambda[R\beta(x), y]\beta(z)+\lambda[\beta(x), R(y)]\beta(z)+\lambda^2[\beta(x), y]\beta(z)\nonumber\\
&&+\varepsilon(x, y)\beta(R(y))[R\alpha(x), z]+\varepsilon(x, y)\beta(R(y))[\alpha(x), R(z)]\nonumber\\
&&+\varepsilon(x, y)\lambda\beta R(y)[\alpha(x), z]+\varepsilon(x, y)\beta(y)[R\alpha(x), R(z)]\nonumber\\
&&+\lambda\varepsilon(x, y)\beta(y)[R\alpha(x), z]
+\lambda\varepsilon(x, y)\beta(y)[\alpha(x), R(z)]\nonumber\\
&&+\lambda^2\varepsilon(x, y)\beta(y)[\alpha(x), z]\nonumber
\end{eqnarray}
\begin{eqnarray}
&=&R\Big([R\beta(x), y]+[\beta(x), R(y)]+\lambda[\beta(x), y]\Big)\beta(z)\nonumber\\
&&+\Big([R\beta(x), y]+[\beta(x), R(y)]+\lambda[\beta(x), y]\Big)R\beta(z)\nonumber\\
&&+\lambda\Big([R\beta(x), y]+[\beta(x), R(y)]+\lambda[\beta(x), y]\Big)\beta(z)\nonumber\\
&&+\varepsilon(x, y)R\beta(y)\Big([R\alpha(x), z]+[\alpha(x), R(z)]+\lambda[\alpha(x), z]\Big)
+\varepsilon(x, y)\beta(y)[R\alpha(x), R(z)]\nonumber\\
&&+\lambda\varepsilon(x, y)\beta(y)\Big([R\alpha(x), z]+[\alpha(x), R(z)]+\lambda[\alpha(x), z]\Big).\nonumber\\
&=&\Big([R\beta(x), y]+[\beta(x), R(y)]+\lambda[\beta(x), y]\Big)\ast\beta(z)\nonumber\\
&&+\varepsilon(x, y)\beta(y)\ast\Big([R\alpha(x), z]+[\alpha(x), R(z)]+\lambda[\alpha(x), z]\Big).\nonumber\\
&=&\{\beta(x), y\}\ast\beta(z)+\varepsilon(x, y)\beta(y)\ast\{\alpha(x), z\}.\nonumber
\end{eqnarray}
The Hom-associativity and Hom-Jacobi color identity are proved in a similar way.
\end{proof}

Whenever $\lambda=0$, $\varepsilon\equiv 1$, $G=\{e\}$, we recover
\begin{corollary}\cite{XL}
 Let $(P, \cdot,  [-, -], \alpha, \beta)$ be a BiHom-Poisson algebra and $R : P\rightarrow P$ be
  Rota-Baxter operator of weight $0$ on $P$. Then $P_R=(P, \ast, \{-, -\}, \alpha, \beta)$ is also a BiHom-Poisson algebra.
Moreover, $R$ is a morphism of BiHom-Poisson algebra of $(P, \ast,  \{x, y\}, \alpha, \beta)$
 into $(P, \cdot,  [-, -],  \alpha, \beta)$.
\end{corollary}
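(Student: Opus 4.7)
The plan is to obtain this corollary as a direct specialization of the preceding theorem, rather than by an independent calculation. Taking $G=\{e\}$ forces every element of $P$ to be homogeneous of the single available degree, and $\varepsilon\equiv 1$ makes all braiding factors trivial. Under these choices, BiHom-$\varepsilon$-skew-symmetry, the BiHom-Jacobi color identity, and the BiHom-Leibniz color identity reduce to their plain (non-color) counterparts, so a BiHom-Poisson color algebra with this data is exactly a BiHom-Poisson algebra in the sense of \cite{XL}. Likewise, a Rota-Baxter operator of weight $0$ in the color setting is just an ordinary Rota-Baxter operator of weight $0$, since the defining identities for $R$ carry no $\varepsilon$-factors.

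Next I would read off the induced operations from the preceding theorem with $\lambda=0$ and $\varepsilon\equiv 1$, obtaining
\[
x\ast y = R(x)\cdot y + x\cdot R(y), \qquad \{x, y\} = [R(x), y] + [x, R(y)].
\]
The preceding theorem then yields directly that $(P,\ast,\{-,-\},\alpha,\beta)$ satisfies BiHom-associativity, BiHom-skew-symmetry, the BiHom-Jacobi identity and the BiHom-Leibniz identity, i.e. it is a BiHom-Poisson algebra. No recomputation is required, since every step in the proof of the theorem specializes without modification.

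For the morphism claim the verification is a one-line computation from the weight-$0$ Rota-Baxter axioms: $R(x)\cdot R(y) = R\bigl(R(x)\cdot y + x\cdot R(y)\bigr) = R(x\ast y)$ and $[R(x),R(y)] = R\bigl([R(x),y]+[x,R(y)]\bigr) = R(\{x,y\})$, together with $R\circ\alpha=\alpha\circ R$ and $R\circ\beta=\beta\circ R$ from the definition of a Rota-Baxter operator. Hence $R$ carries $\ast$ to $\cdot$ and $\{-,-\}$ to $[-,-]$ and commutes with the twisting maps, so it is a morphism $P_R\to P$ of BiHom-Poisson algebras. There is no substantive obstacle here; the only point requiring care is checking at the outset that the color axioms do indeed collapse cleanly to the non-color axioms when $\varepsilon\equiv 1$ and $G$ is trivial, so that the preceding theorem applies verbatim.
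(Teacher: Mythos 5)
Your proposal is correct and follows essentially the same route as the paper, which obtains this corollary precisely by specializing the preceding theorem to $\lambda=0$, $\varepsilon\equiv 1$ and $G=\{e\}$ (the morphism claim being part of that theorem's statement, which your one-line Rota-Baxter computation simply makes explicit). No gaps.
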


With $\beta=\alpha$, we get
\begin{corollary}
If $(P, \cdot,  [-, -], \varepsilon, \alpha)$ be a Hom-Poisson color algebra and $R : P\rightarrow P$ be  Rota-Baxter operator of weight
 $\lambda\in{\bf K}$. Then $P_R=(P, \ast, \{-, -\}, \alpha)$ is also a Hom-Poisson color algebra.
Moreover, $R$ is a morphism of Hom-Poisson color algebra of $(P, \ast,  \{x, y\}, \varepsilon, \alpha, \beta)$\\
 into $(P, \cdot,  [-, -], \varepsilon, \alpha, \beta)$.
\end{corollary}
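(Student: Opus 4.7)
The plan is to reduce this corollary to the preceding theorem by specialising $\beta=\alpha$. First I would observe that a Hom-Poisson color algebra $(P,\cdot,[-,-],\varepsilon,\alpha)$ is exactly a BiHom-Poisson color algebra $(P,\cdot,[-,-],\varepsilon,\alpha,\alpha)$: the BiHom-associativity, BiHom-$\varepsilon$-skew-symmetry, BiHom-Jacobi and BiHom-Leibniz identities all collapse to their Hom-counterparts when the two twisting maps coincide. Similarly, the BiHom Rota-Baxter axioms reduce to the familiar Hom Rota-Baxter axioms under $\beta=\alpha$, and the two compatibility conditions $\alpha R=R\alpha$ and $\beta R=R\beta$ merge into a single condition $\alpha R=R\alpha$.

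With this identification in place, the preceding theorem applied to $(P,\cdot,[-,-],\varepsilon,\alpha,\alpha)$ and to the same operator $R$ produces a BiHom-Poisson color algebra $(P,\ast,\{-,-\},\varepsilon,\alpha,\alpha)$, where $\ast$ and $\{-,-\}$ are given by the stated formulas. Unwinding the identification, this is precisely the Hom-Poisson color algebra $(P,\ast,\{-,-\},\varepsilon,\alpha)$ asserted in the statement; no new axiomatic verification is required.

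For the morphism claim, I would verify the two identities $R(x\ast y)=R(x)\cdot R(y)$ and $R(\{x,y\})=[R(x),R(y)]$ by direct substitution. Indeed, $R(x\ast y)=R\bigl(R(x)\cdot y+x\cdot R(y)+\lambda\, x\cdot y\bigr)=R(x)\cdot R(y)$ by the Rota-Baxter axiom for the product, and the analogous computation using the Rota-Baxter axiom for the bracket gives $R(\{x,y\})=[R(x),R(y)]$. Combined with the intertwining $R\circ\alpha=\alpha\circ R$ that is part of the Rota-Baxter data, this exhibits $R$ as a morphism from $(P,\ast,\{-,-\},\varepsilon,\alpha)$ into $(P,\cdot,[-,-],\varepsilon,\alpha)$. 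The only mild obstacle is being explicit about the dictionary between the Hom and the $\beta=\alpha$ BiHom settings so that the previous theorem transfers cleanly; once that is stated, everything else is immediate.
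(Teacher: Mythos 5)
Your proposal is correct and follows essentially the same route as the paper: the corollary is obtained by specialising the preceding Rota--Baxter theorem to the case $\beta=\alpha$, under the identification of a Hom-Poisson color algebra with a BiHom-Poisson color algebra whose two structure maps coincide. Your explicit verification of the morphism property, $R(x\ast y)=R\bigl(R(x)\cdot y+x\cdot R(y)+\lambda x\cdot y\bigr)=R(x)\cdot R(y)$ and likewise for the bracket, is exactly the argument the paper leaves implicit, so nothing further is needed.
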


\subsection{BiHom-Poisson color algebras induced by centroid and averaging operators}

Now, let us introduce averaging operator for  BiHom-Poisson color algebras.
\begin{definition}
Let $(P, \cdot, [-, -], \varepsilon, \alpha, \beta)$ be a  BiHom-Poisson color algebra. For any positive integers $k, l$,
  an even linear map $\theta : P\rightarrow P$ is called an $(\alpha^k, \beta^l)$-averaging operator over $P$ if
\begin{eqnarray}
\alpha\circ\theta&=&\theta\circ\alpha,\quad \beta\circ\theta=\theta\circ\beta,\\
 \theta(\theta(x)\cdot \alpha^k\beta^l(y))&=&\theta(x)\cdot\theta(y))=\theta(\alpha^k\beta^l(x)\cdot \theta(y)),\\ 
 \theta([\theta(x), \alpha^k\beta^l(y)])&=&[\theta(x), \theta(y)]=\theta([\alpha^k\beta^l(x), \theta(y)]),
\end{eqnarray}
for all $x, y\in\mathcal{H}(A)$.
 \end{definition}
For example, in the Hom-setting, any even $\alpha$-differential operator $d : A\rightarrow A$ (i.e. an $\alpha$-derivation $d$ such that $d^2=0$) over a Hom-associative
 color algebra  is an  $\alpha$-averaging operator.

\begin{theorem}
 Let $(P, \cdot, [-, -], \varepsilon, \alpha)$ be  BiHom-Poisson color algebra and $\theta : P\rightarrow P$ be an 
 $\alpha^0$-averaging operator. Then the two new products
\begin{eqnarray}
 x\ast y=\theta(x)\cdot \theta(y)\quad\mbox{and}\quad \{x, y\}=[\theta(x), \theta(y)], \forall x, y\in \mathcal{H}(P)
\end{eqnarray}
makes $(P, \ast, \{-, -\}, \varepsilon, \alpha)$ a  BiHom-Poisson color algebra.\\
 \end{theorem}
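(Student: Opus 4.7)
The plan is to verify the three defining axioms of a BiHom-Poisson color algebra for $(P,\ast,\{-,-\},\varepsilon,\alpha,\beta)$: BiHom-associativity of $\ast$, the BiHom-Lie axioms for $\{-,-\}$, and the BiHom-Leibniz color identity relating them. The multiplicativity of $\alpha$ and $\beta$ with respect to the new operations is immediate from $\alpha\circ\theta=\theta\circ\alpha$, $\beta\circ\theta=\theta\circ\beta$ together with their multiplicativity for $\cdot$ and $[-,-]$, so the real work lies in these three identities.

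The central tool is the averaging relation written in its two equivalent forms,
\begin{eqnarray}
x\ast y=\theta(x)\cdot\theta(y)=\theta(\theta(x)\cdot y)=\theta(x\cdot\theta(y)),\nonumber
\end{eqnarray}
and likewise $\{x,y\}=[\theta(x),\theta(y)]=\theta([\theta(x),y])=\theta([x,\theta(y)])$. Iterating these identities yields $\theta(\theta(x)\cdot\theta(y))=\theta(x)\cdot\theta^2(y)=\theta^2(x)\cdot\theta(y)$, which is exactly what is needed to move an outer $\theta$ onto one of its arguments inside a product or bracket.

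For BiHom-associativity, I would expand $\alpha(x)\ast(y\ast z)=\alpha\theta(x)\cdot\theta(\theta(y)\cdot\theta(z))$, use the averaging identity to rewrite $\theta(\theta(y)\cdot\theta(z))=\theta^2(y)\cdot\theta(z)$, then apply the original BiHom-associativity of $\cdot$ to obtain $(\theta(x)\cdot\theta^2(y))\cdot\beta\theta(z)$, and finally collapse this back to $(x\ast y)\ast\beta(z)$ by a mirror application of averaging. The $\varepsilon$-skew-symmetry of $\{-,-\}$ is inherited directly from that of $[-,-]$, since $\theta$ is even. For the BiHom-Jacobi identity of $\{-,-\}$, the cyclic sum at $(x,y,z)$ reduces to the original BiHom-Jacobi identity for $[-,-]$ evaluated at $(\theta(x),\theta(y),\theta(z))$, using once more that $\alpha$ and $\beta$ commute with $\theta$ and the iterated form of the averaging identity.

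The main obstacle is the BiHom-Leibniz compatibility $\{\beta\alpha(x),y\ast z\}=\{\beta(x),y\}\ast\beta(z)+\varepsilon(x,y)\beta(y)\ast\{\alpha(x),z\}$. The plan is to start from $\{\beta\alpha(x),y\ast z\}=[\theta\beta\alpha(x),\theta(\theta(y)\cdot\theta(z))]$, use averaging to distribute the outer $\theta$'s, apply the original BiHom-Leibniz identity of $(P,\cdot,[-,-])$ to split the bracket of a product, and then reassemble the resulting two summands into $\ast$'s and $\{-,-\}$'s by running the averaging identities in reverse. The delicate part is the bookkeeping of the $\varepsilon$-factors and the interleaving of $\alpha$, $\beta$ and $\theta$ when choosing which of the two forms of the averaging identity to apply at each step; the manipulations are, however, entirely parallel to those already carried out in the proofs of Theorems 2.9 and 2.11, so no new ideas are required.
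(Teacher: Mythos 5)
Your proposal is correct and follows essentially the same route as the paper: the paper likewise only writes out the BiHom--Leibniz identity, rewriting $\theta(\theta(y)\cdot\theta(z))$ as $\theta^2(y)\cdot\theta(z)$, applying the original Leibniz rule to the triple $(\theta(x),\theta^2(y),\theta(z))$, and reassembling both summands with the two forms of the averaging identity, exactly as you outline. One small caution on your Jacobi step: pushing $\theta$ inside via the iterated identity attaches $\theta^2$ to a fixed slot and the resulting cyclic sum is not literally an instance of the original Jacobi identity, so instead extract an overall $\theta$ using $[\theta(u),\theta(v)]=\theta([\theta(u),v])$, which turns the whole cyclic sum into $\theta$ applied to the original BiHom--Jacobi expression at $(\theta(x),\theta(y),\theta(z))$ and hence to $\theta(0)=0$.
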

\begin{proof}
It is easy to verify the BiHom-associativity and BiHom-Lie color identities. It remains to prove the BiHom-Leibniz color identity.
For any $x, y, z\in\mathcal{H}(P)$,
 \begin{eqnarray}
  \{\alpha\beta(x), x\ast y\}
&=&[\theta\alpha\beta(x), \theta(y\ast z)]\nonumber\\
&=&[\theta\beta\alpha(x), \theta(\theta(y)\theta(z))]\nonumber\\
&=&[\alpha\beta\theta(x), \theta^2(y)\theta(z)]\nonumber\\
&=&[\beta\theta(x), \theta^2(y)]\beta\theta(z)+\varepsilon(x, y)\beta\theta^2(y)[\theta\alpha(x), \theta(z)]\nonumber\\
&=&\theta[\theta\beta(x), \theta(y)]\theta\beta(z)+\varepsilon(x, y)\beta\theta^2(y)[\theta\alpha(x), \theta(z)]\nonumber.
 \end{eqnarray}
The even linear map being an $\alpha^0$-averaging operator again, it comes
 \begin{eqnarray}
  \{\alpha\beta(x), x\ast y\}
&=&\theta[\theta\beta(x), \theta(y)]\theta\beta(z)+\varepsilon(x, y)\theta\beta\theta(y)\theta[\alpha(x), \theta(z)]\nonumber\\
&=&\theta[\theta\beta(x), \theta(y)]\theta\beta(z)+\varepsilon(x, y)\theta\Big(\theta\beta(y)[\theta\alpha(x), \theta(z)]\Big)\nonumber\\
&=&\theta[\theta\beta(x), \theta(y)]\theta\beta(z)+\varepsilon(x, y)\theta\beta(y)\theta[\theta\alpha(x), \theta(z)]\nonumber\\
&=&\{\beta(x), y\}\ast\beta(z)+\varepsilon(x, y)\beta(y)\ast\{\alpha(x), z\}\nonumber.
 \end{eqnarray}
This finishes the proof.
\end{proof}

Whenever, $\alpha=\beta$, we recover Theorem 2.11 \cite{BIA}.

\begin{theorem}
 Let $(P, \cdot, [-, -], \varepsilon, \alpha)$ be a  BiHom-Poisson color algebra and $\theta : P\rightarrow P$ be an injective  
$(\alpha^k, \beta^l)$-averaging operator. Then the new products
\begin{eqnarray}
 x\ast y=\theta(x)\cdot \alpha^k\beta^l(y)\quad\mbox{and}\quad \{x, y\}=[\theta(x), \alpha^k\beta^l(y)], \forall x, y\in \mathcal{H}(P)
\end{eqnarray}
makes $(P, \ast, \{-, -\}, \varepsilon, \alpha)$ a  BiHom-Poisson color algebra.\\
Moreover, $\theta$ is a morphism of  BiHom-Poisson color algebra of $(P, \ast,  \{-, -\}, \varepsilon, \alpha, \theta)$
 onto\\ $(P, \cdot,  [-, -], \varepsilon, \alpha, \theta)$.
 \end{theorem}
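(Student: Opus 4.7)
The plan is to exploit injectivity of $\theta$: I verify each BiHom-Poisson axiom for $(P,\ast,\{-,-\})$ by applying $\theta$ to both sides and reducing to the corresponding axiom of the given structure $(P,\cdot,[-,-])$. The engine of this reduction is the immediate consequence of the averaging axioms that
\[\theta(x\ast y)=\theta(x)\cdot\theta(y)\quad\text{and}\quad\theta(\{x,y\})=[\theta(x),\theta(y)],\]
which, combined with $\theta\alpha=\alpha\theta$ and $\theta\beta=\beta\theta$, simultaneously settles the second assertion of the theorem: $\theta$ is already a morphism of BiHom-Poisson color algebras from $(P,\ast,\{-,-\},\varepsilon,\alpha,\beta)$ onto $(P,\cdot,[-,-],\varepsilon,\alpha,\beta)$, once the first assertion is established.

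I would then carry out the axiom verifications in the following order. First, for BiHom-associativity of $\ast$, apply $\theta$ to both sides of the associator; the morphism relation turns everything into an associator of $\cdot$ evaluated at $(\theta(x),\theta(y),\theta(z))$, which vanishes, and injectivity yields the identity on $P$. Second, the BiHom-$\varepsilon$-skew-symmetry and the BiHom-Jacobi color identity for $\{-,-\}$ are handled by the same template: push $\theta$ into each bracket, use that $\theta$ commutes with every monomial in $\alpha,\beta$ (in particular with $\beta^{2}$ appearing in the Jacobi identity), invoke the BiHom-Lie identity for $[-,-]$ on the $\theta$-images, and cancel $\theta$. Since $\theta$ is even, the bicharacter values transfer unchanged, $\varepsilon(\theta(x),\theta(y))=\varepsilon(x,y)$, so no sign factors need to be reconciled. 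Third, for the BiHom-Leibniz color identity, applying $\theta$ to $\{\alpha\beta(x),y\ast z\}$ gives $[\alpha\beta\theta(x),\theta(y)\cdot\theta(z)]$, while applying $\theta$ to $\{\beta(x),y\}\ast\beta(z)+\varepsilon(x,y)\beta(y)\ast\{\alpha(x),z\}$ gives $[\beta\theta(x),\theta(y)]\cdot\beta\theta(z)+\varepsilon(x,y)\beta\theta(y)\cdot[\alpha\theta(x),\theta(z)]$; equality of these two expressions is the BiHom-Leibniz identity of the ambient structure applied to $(\theta(x),\theta(y),\theta(z))$, and a final appeal to injectivity concludes.

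The main obstacle I anticipate is purely bookkeeping: the powers $\alpha^{k}\beta^{l}$ appearing inside $\ast$ and $\{-,-\}$, together with the iterated $\beta^{2}$ in the Jacobi identity, must be shuffled around the various copies of $\theta,\alpha,\beta$. Because $\theta$ commutes with both $\alpha$ and $\beta$, it commutes with every monomial in them, so all such shuffling collapses to a single application of the two morphism relations displayed above. Hence no new identity beyond the averaging axioms themselves is required, and the proof is a clean transfer argument in which injectivity of $\theta$ does all the real work.
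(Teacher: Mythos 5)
Your proposal is correct and takes essentially the same route as the paper: both arguments rest on the consequences $\theta(x\ast y)=\theta(x)\cdot\theta(y)$ and $\theta(\{x,y\})=[\theta(x),\theta(y)]$ of the averaging axioms (together with $\theta\alpha=\alpha\theta$, $\theta\beta=\beta\theta$ and evenness of $\theta$), which transfer each BiHom-Poisson axiom to the ambient structure evaluated at $(\theta(x),\theta(y),\theta(z))$ and then cancel $\theta$ by injectivity, and the same identities give the morphism statement. The only difference is presentational: the paper transforms $\theta$ of the left-hand side of the BiHom-Leibniz identity step by step into $\theta$ of the right-hand side, whereas you compute $\theta$ of each side separately and compare.
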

\begin{proof}
We leave the  checking of the BiHom-associativity and BiHom-Lie color identities to the reader.
Now, let us prove the Hom-Leibniz color identity; for any $x, y, z\in\mathcal{H}(P)$,
 \begin{eqnarray}
  \theta(\{\alpha\beta(x), y\ast z\})
&=&\theta[\theta\alpha\beta(x), \alpha^k\beta^l(y\ast z)]=\theta[\theta\beta\alpha(x), \alpha^k\beta^l(\theta(y)\alpha^k\beta^l(z))]\nonumber\\
&=&[\theta\beta\alpha(x), \theta(\theta(y)\alpha^k\beta^l(z))]=[\beta\alpha\theta(x), \theta(y)\theta(z)]\nonumber\\
&=&[\theta\beta(x), \theta(y)]\beta\theta(z) +\varepsilon(x, y)\beta\theta(y)\cdot[\theta\alpha(x), \theta(z)]\nonumber\\
&=&\theta[\theta\beta(x), \alpha^k\beta^l(y)]\beta\theta(z) +\varepsilon(x, y)\beta\theta(y)\cdot\theta[\theta\alpha(x), \alpha^k\beta^l(z)]\nonumber\\
&=&\theta\Big(\theta[\theta\beta(x), \alpha^k\beta^l(y)]\alpha^{k}\beta^{l+1}(z) +\varepsilon(x, y)\beta\theta(y)\cdot\beta^l\alpha^{k}[\theta\alpha(x), \beta^l\alpha^k(z)]\Big)\nonumber\\
&=&\theta\Big(\theta\{\beta(x), y\}\alpha^{k}\beta^{l+1}(z) +\varepsilon(x, y)\beta\theta(y)\cdot\alpha^{k}\beta^l\{\alpha(x), z\}\Big)\nonumber\\
&=&\theta\Big(\{\beta(x), y\}\ast\beta(z) +\varepsilon(x, y)\beta(y)\ast\{\alpha(x), z\}\Big)\nonumber.
 \end{eqnarray}
The second part comes from the definition of averaging operator.
The associativity and Hom-Jacobi identity are proved in the same way.
\end{proof}

Whenever, $\alpha=\beta$, we recover Theorem 2.13 \cite{BIA}.

\begin{definition}
Let $(P, \cdot, [-, -], \varepsilon, \alpha, \beta)$ be a  BiHom-Poisson color algebra. For any integers $k, l$,  an even linear map $\theta : P\rightarrow P$ is called
an element of $(\alpha^k, \beta^l)$-centroid on $P$ if
\begin{eqnarray}
\alpha\circ\theta&=&\theta\circ\alpha,\quad \beta\circ\theta=\theta\circ\beta,\\
 \theta(x)\cdot \alpha^k\beta^l(y)&=&\theta(x)\cdot\theta(y)=\alpha^k\beta^l(x)\cdot \theta(y)),\\ 
 {[\theta(x), \alpha^k\beta^l(y)]}&=&[\theta(x), \theta(y)]=[\alpha^k\beta^l(x), \theta(y)]),
\end{eqnarray}
for all $x, y\in\mathcal{H}(P)$.
 \end{definition}
The set of elements of centroid is called centroid.

\begin{theorem}
 Let $(P, \cdot, [-, -], \alpha, \beta)$ be a  BiHom-Poisson color algebra and $\theta : P\rightarrow P$ an element of 
$(\alpha^k, \beta^l)$-centroid. Then, for any $k, l\geq 0$
$$(P, \ast, \{-, -\}, \varepsilon, \alpha^{k+1}\beta^l, \alpha^k\beta^{l+1})$$
is a  BiHom-Poisson color algebra with the multiplications
$$x\ast y:=\alpha^k\beta^l(x)\cdot\theta(y)\quad\mbox{and}\quad \{x, y\}:=[\alpha^k\beta^l(x), \theta(y)]$$
for all $x, y\in\mathcal{H}(P)$.
\end{theorem}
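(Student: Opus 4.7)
My plan is to verify the three axioms of a BiHom-Poisson color algebra for the new structure: (i) BiHom-associativity of $\ast$ with twist $\alpha^{k+1}\beta^{l}$, (ii) the BiHom-Lie color axioms for $\{-,-\}$ with twists $\alpha^{k+1}\beta^{l}$ and $\alpha^{k}\beta^{l+1}$, and (iii) the BiHom-Leibniz compatibility between $\ast$ and $\{-,-\}$. The single workhorse throughout is the centroid identity in its three equivalent forms
$$\alpha^{k}\beta^{l}(x)\cdot\theta(y)=\theta(x)\cdot\theta(y)=\theta(x)\cdot\alpha^{k}\beta^{l}(y),$$
together with the analogous formulas for the bracket and the hypothesis $\alpha\theta=\theta\alpha$, $\beta\theta=\theta\beta$.

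First I would check the commutation of the new twists, which is immediate from $\alpha\beta=\beta\alpha$. For BiHom-associativity of $\ast$, I would expand $(x\ast y)\ast\alpha^{k+1}\beta^{l}(z)$ using the symmetric form $u\ast v=\theta(u)\cdot\theta(v)$, migrate $\theta$ and push the twisting maps $\alpha^{k}\beta^{l}$ into the appropriate slot via the centroid identity so that the expression reduces to an application of the BiHom-associativity of $(P,\cdot,\alpha,\beta)$ with twist $\alpha$; running the same manipulation on $\alpha^{k+1}\beta^{l}(x)\ast(y\ast z)$ then yields equality. The BiHom-$\varepsilon$-skew-symmetry of $\{-,-\}$ follows at once from $\{x,y\}=[\theta(x),\theta(y)]$ and the corresponding property of $[-,-]$, while the BiHom-Jacobi identity for $\{-,-\}$ is obtained by substituting the symmetric form into the cyclic sum and invoking the BiHom-Jacobi identity of $[-,-]$.

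The main work is the BiHom-Leibniz compatibility
$$\{\alpha'\beta'(x),y\ast z\}=\{\beta'(x),y\}\ast\beta'(z)+\varepsilon(x,y)\,\beta'(y)\ast\{\alpha'(x),z\},$$
with $\alpha'=\alpha^{k+1}\beta^{l}$, $\beta'=\alpha^{k}\beta^{l+1}$, hence $\alpha'\beta'=\alpha^{2k+1}\beta^{2l+1}$. I would start from the left-hand side, use the centroid identity to expose a single bracket of the form $[\alpha\beta\,\theta^{m}(x),\,\theta^{n}(y)\cdot\theta^{p}(z)]$ with matching twistings on the two tensor factors, then apply the BiHom-Leibniz identity of the original algebra $(P,\cdot,[-,-],\alpha,\beta)$. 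The result splits into two terms which, after running the centroid substitutions in reverse (trading back between $\theta$ and $\alpha^{k}\beta^{l}$ as needed), repackage exactly as $\{\beta'(x),y\}\ast\beta'(z)$ and $\varepsilon(x,y)\,\beta'(y)\ast\{\alpha'(x),z\}$. The technique mirrors the proof of the Rota-Baxter theorem above.

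The main obstacle is Step (iii): keeping track of the powers of $\alpha$ and $\beta$ through each centroid substitution so that the twistings align precisely with $\alpha^{k+1}\beta^{l}$ and $\alpha^{k}\beta^{l+1}$ on the final line. This is a pure bookkeeping difficulty rather than a conceptual one; it is made manageable by choosing at each step whichever of the three equivalent forms of $\ast$ and $\{-,-\}$ minimizes the number of twists that have to be moved.
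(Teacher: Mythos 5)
Your proposal is correct and follows essentially the same route as the paper: rewrite $\ast$ and $\{-,-\}$ in the symmetric forms $\theta(x)\cdot\theta(y)$ and $[\theta(x),\theta(y)]$, shuttle $\alpha^{k}\beta^{l}$ in and out of slots via the centroid identities (together with $\alpha\theta=\theta\alpha$, $\beta\theta=\theta\beta$), and thereby reduce BiHom-associativity, the BiHom-Jacobi identity and the BiHom-Leibniz identity for the twists $\alpha^{k+1}\beta^{l}$, $\alpha^{k}\beta^{l+1}$ to the corresponding identities of the original algebra. The only caveat is the bookkeeping you already flag: in the associativity check both new twists occur (the identity is $\alpha^{k+1}\beta^{l}(x)\ast(y\ast z)=(x\ast y)\ast\alpha^{k}\beta^{l+1}(z)$), exactly as in the paper's computation.
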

\begin{proof}
Let us check the BiHom-associativity. For any homogeneous elements $x, y, z\in P$, we have
\begin{eqnarray}
 \alpha^{k+1}\beta^l(x)\ast(y\ast z)&=&\theta\alpha^{k+1}\beta^l(x)\cdot\theta(\theta(y)\cdot\theta(z))\nonumber\\
&=&\theta\alpha^{k+1}\beta^l(x)\cdot\alpha^{k}\beta^l(\theta(y)\cdot\theta(z))\nonumber\\
&=&\theta\alpha^{k+1}\beta^l(x)\cdot(\alpha^{k}\beta^l\theta(y)\cdot\alpha^{k}\beta^l\theta(z))\nonumber\\
&=&\alpha\theta(\alpha^{k}\beta^l(x))\cdot(\alpha^{k}\beta^l\theta(y)\cdot\alpha^{k}\beta^l\theta(z))\nonumber\\
&=&(\theta\alpha^{k}\beta^l(x)\cdot\alpha^{k}\beta^l\theta(y))\cdot\alpha^{k}\beta^{l+1}\theta(z))\nonumber\\
&=&\alpha^{k}\beta^l(\theta(x)\cdot\theta(y))\cdot\theta\alpha^{k}\beta^{l+1}(z))\nonumber\\
&=&(\theta(x)\cdot\theta(y))\ast\alpha^{k}\beta^{l+1}(z)\nonumber\\
&=&(x\ast y)\ast\alpha^{k}\beta^{l+1}(z)\nonumber.
\end{eqnarray}

Now let us prove the BiHom-Jacobi identity,
\begin{eqnarray}
 \{(\alpha^k\beta^{l+1})^2(x), \{(\alpha^k\beta^{l+1})(y), (\alpha^{k+1}\beta^l)(z)\}\}
&=&[\theta(\alpha^k\beta^{l+1})^2(x), \theta[\theta\alpha^k\beta^{l+1}(y), \theta\alpha^{k+1}\beta^{l}(z)]] \nonumber\\
&=&[\theta(\alpha^k\beta^{l+1})^2(x), \alpha^k\beta^{l}[\alpha^k\beta^{l}\theta\beta(y), \alpha^{k}\beta^{l}\theta\alpha(z)]] \nonumber\\
&=&\alpha^{2k}\beta^{2l}[\beta^2\theta(x), [\beta\theta(y), \alpha\theta(z)]] \nonumber.
\end{eqnarray}
Thus,
\begin{eqnarray}
 \oint \varepsilon(z, x)\{(\alpha^k\beta^{l+1})^2(x), \{(\alpha^k\beta^{l+1})(y), (\alpha^{k+1}\beta^l)(z)\}\}\\
\qquad=\alpha^{2k}\beta^{2l}\oint\varepsilon(z, x)[\beta^2\theta(x), [\beta\theta(y), \alpha\theta(z)]]=0.\nonumber
\end{eqnarray}
Finally let us check the BiHom-Leibniz color identity :
\begin{eqnarray}
 \{\alpha^{k+1}\beta^l\alpha^k\beta^{l+1}(x), y\ast z\}
&=&\{\alpha^{2k}\beta^{2l}\alpha\beta(x), y\ast z\}\nonumber\\
&=&[\theta(\alpha^{2k}\beta^{2l}\alpha\beta(x)), \theta(\theta(y)\theta(z))]\nonumber\\
&=&[\alpha^{2k}\beta^{2l}\alpha\beta\theta(x), \alpha^k\beta^l(\theta(y)\theta(z))]\nonumber\\
&=&\alpha^{k}\beta^{l}[\alpha\beta(\theta\alpha^{k}\beta^{l}(x)), \theta(y)\theta(z)]\nonumber\\
&=&\alpha^{k}\beta^{l}\Big([\beta(\theta\alpha^{k}\beta^{l}(x)), \theta(y)]\beta\theta(z)
+\varepsilon(x, y)\beta\theta(y)[\alpha(\theta\alpha^{k}\beta^{l}(x)), \theta(z)]\Big)\nonumber\\
&=&\alpha^{k}\beta^{l}\Big([\theta(\alpha^{k}\beta^{l+1}(x)), \theta(y)]\beta\theta(z)
+\varepsilon(x, y)\beta\theta(y)[\alpha(\theta\alpha^{k}\beta^{l}(x)), \theta(z)]\Big)\nonumber\\
&=&\alpha^{k}\beta^{l}[\theta(\alpha^{k}\beta^{l+1}(x)), \theta(y)]\cdot\theta\alpha^{k}\beta^{l+1}(z)\nonumber\\
&&+\varepsilon(x, y)\theta\alpha^{k}\beta^{l+1}(y)\cdot\alpha^{k}\beta^{l}[\theta\alpha^{k+1}\beta^{l}(x), \theta(z)]\nonumber\\
&=&\theta\{\alpha^{k}\beta^{l+1}(x), y\}\cdot\theta\alpha^{k}\beta^{l+1}(z)\nonumber\\
&&+\varepsilon(x, y)\theta\alpha^{k}\beta^{l+1}(y)\cdot\theta\{\alpha^{k+1}\beta^{l}(x), z\}\nonumber\\
&=&\{\alpha^{k}\beta^{l+1}(x), y\}\ast\alpha^{k}\beta^{l+1}(z)
+\varepsilon(x, y)\alpha^{k}\beta^{l+1}(y)\ast\{\alpha^{k+1}\beta^{l}(x), z\}\nonumber.
\end{eqnarray}
This conclude the proof.
\end{proof}
\subsection{BiHom-Poisson color algebras induced by multipliers}
Let $P$ be a  BiHom-Poisson color algebra and 
 $\sigma : G\times G\rightarrow \mathbb{K}^*$ be a {\it symmetric multiplier} i.e.
\begin{enumerate}
 \item [i)] $\sigma(g, g')=\sigma(g', g), \forall x, y\in G$
\item [ii)] $\sigma(g, g')\sigma(g'', g+g')$ is invariant under cyclic permutation of $g, g', g''\in G$.
\end{enumerate}

\begin{theorem}
Let $(P, \ast, [-, -], \varepsilon, \alpha)$ be a  BiHom-Poisson color algebra and 
 $\delta : G\times G\rightarrow \mathbb{K}^*$ be the bicharacter associated with the   {multiplier} $\sigma$ on $G$ 
Then, $(P, \ast^\sigma, [-, -]^\sigma, \varepsilon\delta, \alpha)$ is also a  BiHom-Poisson color algebra with
$$x\ast^\sigma y=\sigma(x, y)x\ast y,\quad[x, y]^\sigma=\sigma(x, y)[x, y]\quad\mbox{and}\quad \varepsilon\delta(x, y)
=\varepsilon(x, y)\sigma(x, y)\sigma(y, x)^{-1},$$
for any $x, y\in \mathcal{H}(P)$.\\
Moreover, an endomorphism of $(P, \cdot,  [-, -], \varepsilon, \alpha, \beta)$ is also an endomorphism of 
$(P, \ast^\sigma,  [x, y]^\sigma, \varepsilon, \alpha, \beta)$.
\end{theorem}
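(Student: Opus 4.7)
The plan is to verify, in turn, the three defining axioms of a BiHom-Poisson color algebra for the twisted structure $(P, \ast^\sigma, [-,-]^\sigma, \varepsilon\delta, \alpha, \beta)$, and then to dispose of the endomorphism assertion by a one-line computation. Because the two new operations differ from the old ones only by the scalar $\sigma(x,y)$, the strategy in every case is uniform: expand both sides of the identity being verified, factor out all the $\sigma$-scalars, and reduce the residual identity to the corresponding axiom of the original BiHom-Poisson color algebra. The scalar matching is driven by the multiplier cocycle identity $\sigma(g,g'+g'')\sigma(g',g'') = \sigma(g,g')\sigma(g+g',g'')$, by the symmetry $\sigma(g,g') = \sigma(g',g)$, and by the cyclic invariance of $\sigma(g,g')\sigma(g'', g+g')$.

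First I would handle BiHom-associativity: expanding $\alpha(x) \ast^\sigma (y \ast^\sigma z)$ and $(x \ast^\sigma y) \ast^\sigma \alpha(z)$ isolates the scalars $\sigma(y,z)\sigma(x, y+z)$ and $\sigma(x,y)\sigma(x+y, z)$ respectively, which coincide by the cocycle identity, so associativity for $\ast$ transfers. Next I would verify the $\varepsilon\delta$-skew-symmetry of $[-,-]^\sigma$; since $\sigma$ is symmetric one has $\delta \equiv 1$, so $\varepsilon\delta = \varepsilon$, and the identity reduces to the original BiHom-$\varepsilon$-skew-symmetry multiplied through by $\sigma(x,y) = \sigma(y,x)$. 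For the BiHom-Jacobi identity, the coefficient of $[\beta^2(x), [\beta(y), \alpha(z)]]$ in the twisted cyclic sum works out to $\varepsilon(z,x)\,\sigma(y,z)\sigma(x, y+z)$; the hypothesis that $\sigma(g,g')\sigma(g'', g+g')$ is cyclically invariant says precisely that the $\sigma$-factor is unchanged under cyclic permutation of $(x,y,z)$, so it factors out of the cyclic sum and what remains is the original BiHom-Jacobi identity.

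For the BiHom-Leibniz identity, expanding $\{\alpha(x), y \ast^\sigma z\}^\sigma$ pulls out $\sigma(y,z)\sigma(x, y+z)$, while the two summands of the right-hand side pull out $\sigma(x,y)\sigma(x+y,z)$ and $\sigma(x,z)\sigma(y, x+z)$ respectively. The first matches the left-hand coefficient by the cocycle, and the second matches after invoking the cocycle again combined with the symmetry $\sigma(x,z) = \sigma(z,x)$. Once the scalar coefficients coincide, the twisted identity reduces to the original BiHom-Leibniz identity. I expect this Leibniz step to be the main obstacle, because three different $\sigma$-scalars must be reconciled simultaneously, and one has to take care that the extra $\varepsilon(x,y)$ factor does not interact incorrectly with $\sigma$ (it does not, by symmetry of $\sigma$). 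Finally, for the endomorphism claim, if $f$ commutes with $\alpha$ and $\beta$ and is multiplicative for $\ast$ and $[-,-]$, then because $f$ is even one has $f(x \ast^\sigma y) = \sigma(x,y)\,f(x \ast y) = \sigma(x,y)\,f(x)\ast f(y) = f(x) \ast^\sigma f(y)$, and the same computation applies to $[-,-]^\sigma$, which completes the proof.
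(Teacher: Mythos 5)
Your verification strategy (factor out the $\sigma$-scalars, reduce to the original axioms via the cocycle identity) is the same one the paper uses, and your extra work on associativity, Jacobi and the endomorphism claim is fine as far as it goes (the paper only writes out the BiHom-Leibniz identity). But there is a real mismatch with the statement: you assume $\sigma$ is symmetric and immediately conclude $\delta\equiv 1$, hence $\varepsilon\delta=\varepsilon$. That collapses the theorem to the corollary that follows it in the paper. The content of the theorem as stated is precisely that for a multiplier whose associated bicharacter $\delta(x,y)=\sigma(x,y)\sigma(y,x)^{-1}$ is nontrivial, the twisted operations form a BiHom-Poisson \emph{color} algebra with respect to the deformed bicharacter $\varepsilon\delta$; the paper's own computation keeps $\delta$ alive and deliberately lands on the coefficient $\varepsilon\delta(x,y)$ in the Leibniz identity. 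Your treatment of $\varepsilon\delta$-skew-symmetry and of the BiHom-Jacobi identity rests entirely on $\delta\equiv 1$, so it does not prove the statement in this general form.

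The gap is easy to close, and in fact no symmetry is needed anywhere: for skew-symmetry use $\sigma(x,y)=\delta(x,y)\,\sigma(y,x)$, so $[\beta(x),\alpha(y)]^\sigma=-\varepsilon\delta(x,y)[\beta(y),\alpha(x)]^\sigma$; for Jacobi, check that the coefficient $\delta(z,x)\,\sigma(y,z)\,\sigma(x,y+z)$ is invariant under cyclic permutation of $(x,y,z)$ — this follows from the cocycle identity alone (apply it to the triples $(x,y,z)$ and $(y,x,z)$), so the original Jacobi identity applies; and for Leibniz, matching the second summand requires $\sigma(y,x)\sigma(x+y,z)=\sigma(y,x+z)\sigma(x,z)$, which is exactly the cocycle for $(y,x,z)$, producing the $\delta(x,y)$ factor rather than cancelling it. (To be fair, the paper's hypotheses are muddled: the paragraph before the theorem posits a symmetric multiplier, and the paper's own proof silently uses symmetry, or the cyclic-invariance condition, under the label ``the fact that $\sigma$ is a multiplier''; but if one reads the theorem with $\varepsilon\delta$ as intended, your symmetry assumption must be removed and the three identities re-verified as above.) Your endomorphism argument is correct.
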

\begin{proof}
For any homogeneous elements $x, y, z\in P$,
 \begin{eqnarray}
 [\alpha\beta(x), y\ast^\sigma z]^\sigma
&=&[\alpha\beta(x),  \sigma(y, z)y\ast z]^\sigma\nonumber\\
&=&\sigma(y, z)\sigma(x, y+z)[\alpha\beta(x), y\ast z]\nonumber\\
&=&\sigma(y, z)\sigma(x, y+z)\Big([\beta(x), y]\ast\beta(z)+\varepsilon(x, y)\beta(y)\ast[\alpha(x), z]\Big)\nonumber\\
&=&\sigma(y, z)\sigma(x, y+z)[\beta(x), y]\ast\beta(z)\nonumber\\
&&\quad+\varepsilon(x, y)\sigma(y, z)\sigma(x, y+z)\beta(y)\ast[\alpha(x), z]\nonumber.
 \end{eqnarray}
Using twice the fact that $\sigma$ is a multiplier, we have
\begin{eqnarray}
 [\alpha\beta(x), y\ast^\sigma z]^\sigma
&=&\sigma(y, z)\sigma(x, y+z)[\beta(x), y]\ast\beta(z)+\sigma(x, y)\sigma(z, y+x)\varepsilon(x, y)\beta(y)\ast[\alpha(x), z]\nonumber\\
&=&\sigma(y, z)\sigma(x, y+z)[\beta(x), y]\ast\beta(z)\nonumber\\
&&\quad+\varepsilon(x, y)\sigma(x, y)\sigma(y, x)^{-1}\sigma(z, x+y)\sigma(y, x)\beta(y)\ast[\alpha(x), z]\nonumber\\
&=&\sigma(y, z)\sigma(x, y+z)[\beta(x), y]\ast\beta(z)\nonumber\\
&&\quad+\varepsilon\delta(x, y)\sigma(y, x+z)\sigma(x, z)\beta(y)\ast[\alpha(x), z]\nonumber\\
&=&[\beta(x), y]^\sigma\ast^\sigma\beta(z)+\varepsilon\delta(x, y)\beta(y)\ast^\sigma[\alpha(x), z]^\sigma.\nonumber
 \end{eqnarray}
This ends the proof.
\end{proof}

\begin{corollary}
If $(P, \ast, [-, -], \varepsilon, \alpha, \beta)$ is a  BiHom-Poisson color algebra and $\sigma$ a {\it symmetric multiplier} on $G$. Then
 $(P, \ast^\sigma, [-, -]^\sigma, \varepsilon, \alpha, \beta)$ is a  BiHom-Poisson color algebra with
$$x\ast^\sigma y=\sigma(x, y)x\ast y\quad\mbox{and}\quad[x, y]^\sigma=\sigma(x, y)[x, y],$$
for any $x, y\in \mathcal{H}(P)$.
\end{corollary}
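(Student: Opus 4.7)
The plan is to deduce this corollary directly from the preceding theorem, since the only difference between its conclusion and this one is the bicharacter ($\varepsilon\delta$ in the theorem versus $\varepsilon$ here). So the whole argument reduces to showing that, under the extra symmetry hypothesis on $\sigma$, the bicharacter $\delta$ associated with $\sigma$ is trivial.

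First I would recall from Example~2.3 that, for any multiplier $\sigma$, the associated bicharacter is defined by $\delta(x,y)=\sigma(x,y)\sigma(y,x)^{-1}$. When $\sigma$ is symmetric, i.e.\ $\sigma(g,g')=\sigma(g',g)$ for all $g,g'\in G$, this formula collapses to $\delta(x,y)=1$ identically. Consequently $\varepsilon\delta(x,y)=\varepsilon(x,y)$ for all homogeneous $x,y\in P$.

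Next I would invoke the previous theorem, applied to the same BiHom-Poisson color algebra $(P,\ast,[-,-],\varepsilon,\alpha,\beta)$ and the multiplier $\sigma$. That theorem gives that $(P,\ast^\sigma,[-,-]^\sigma,\varepsilon\delta,\alpha,\beta)$, with
\[
x\ast^\sigma y=\sigma(x,y)\,x\ast y,\qquad [x,y]^\sigma=\sigma(x,y)\,[x,y],
\]
is a BiHom-Poisson color algebra. Substituting the identification $\varepsilon\delta=\varepsilon$ established above yields precisely the claimed structure $(P,\ast^\sigma,[-,-]^\sigma,\varepsilon,\alpha,\beta)$.

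There is essentially no obstacle here: the entire content is bookkeeping with the bicharacter. The only point worth double-checking is that every place where the previous theorem's proof used $\varepsilon\delta$ (in verifying BiHom-associativity, BiHom-$\varepsilon$-skew-symmetry, the BiHom-Jacobi color identity and the BiHom-Leibniz color identity) indeed specializes consistently to $\varepsilon$ when $\delta\equiv 1$, which is automatic since the two functions coincide on $G\times G$. No fresh computation is required.
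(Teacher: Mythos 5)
Your proposal is correct and is exactly the route the paper intends: the corollary is stated without proof as an immediate specialization of the preceding theorem, since symmetry of $\sigma$ gives $\delta(x,y)=\sigma(x,y)\sigma(y,x)^{-1}=1$, hence $\varepsilon\delta=\varepsilon$. Your observation that no fresh computation is needed matches the paper's treatment.
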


Similar constructions may be made for BiHom-Poisson color algebra of arbitrary degree which will include BiHom-Gerstenhaber color algebras.

\end{document}